\newtheorem{thm}{Theorem}[section]
\newtheorem{lem}{Lemma}[section]
\newtheorem{exam}{Example}
\theoremstyle{definition}
\begin{document}
\title{Enumerating  Cayley (di-)graphs on dihedral groups\footnote{Supported
by National Natural Science Foundation of China (Nos. 11671344 and 11531011).}}
\author{{\small Xueyi Huang, \ \ Qiongxiang Huang\footnote{
Corresponding author.}\setcounter{footnote}{-1}\footnote{
\emph{E-mail:} huangxymath@gmail.com (X. Huang),  huangqx@xju.edu.cn (Q. Huang).}}\\[2mm]\scriptsize
College of Mathematics and Systems Science,
\scriptsize Xinjiang University, Urumqi, Xinjiang 830046, P. R. China}
\date{}
\maketitle
{\flushleft\large\bf Abstract}
Let $p$ be an odd prime, and $D_{2p}=\langle \tau,\sigma\mid \tau^p=\sigma^2=e,\sigma\tau\sigma=\tau^{-1}\rangle$ the dihedral group of order $2p$. In this paper, we provide the number of (connected) Cayley (di-)graphs  on $D_{2p}$ up to isomorphism by using the P\'{o}lya enumeration theorem.  In the process, we also enumerate (connected)  Cayley digraphs on $D_{2p}$ of out-degree $k$ up to isomorphism for each $k$.
\vspace{0.1cm}
\begin{flushleft}
\textbf{Keywords:} Cayley (di-)graph; Dihedral group; Cayley isomorphism; P\'{o}lya enumeration theorem.
\end{flushleft}
\textbf{AMS Classification:} 05C25.

\section{Introduction}\label{s-1}
Let $G$ be a finite group, and let $S$ be a subset of $G$ such that $1\not\in S$. The \emph{Cayley digraph} on $G$ with respect to $S$, denoted by $\mathrm{Cay}(G,S)$, is the digraph with vertex set $G$ and with an arc from $g$ to $h$ if $hg^{-1}\in S$. If $S$ is symmetric, i.e., $S^{-1}=\{s^{-1}\mid s\in S\}=S$, then $hg^{-1}\in S$ if and only if $gh^{-1}\in S$,  and so $\mathrm{Cay}(G,S)$ can be viewed as an undirected graph, which is  called the \textit{Cayley graph} on $G$ with respect to $S$. In particular, if $G$ is a cyclic group, then the Cayley (di-)graph  $\mathrm{Cay}(G,S)$ is  called a \emph{circulant (di-)graph}.

Let $\mathrm{Cay}(G,S)$ be the Cayley digraph on $G$ with respect to $S$. Suppose that $\sigma\in \mathrm{Aut}(G)$, where $\mathrm{Aut}(G)$ is the \emph{automorphism group} of $G$. Let $T=\alpha(S)$. Then it is easily shown that $\alpha$ induces an isomorphism from $\mathrm{Cay}(G,S)$ to $\mathrm{Cay}(G,T)$. Such an isomorphism is called a \emph{Cayley isomorphism}. However, it is possible for two Cayley digraphs $\mathrm{Cay}(G,S)$ and $\mathrm{Cay}(G,T)$ to be isomorphic but no Cayley isomorphisms mapping $S$ to $T$. The Cayley digraph (resp. Cayley graph) $\mathrm{Cay}(G,S)$ is called a \emph{DCI-graph} (resp. \textit{CI-graph}) of $G$ if, for any Cayley digraph (resp. Cayley graph) $\mathrm{Cay}(G,T)$, whenever $\mathrm{Cay}(G,S)\cong \mathrm{Cay}(G,T)$ we have $\alpha(S)=T$ for some $\alpha\in \mathrm{Aut}(G)$. A group $G$ is called a \emph{DCI-group} (resp. \textit{CI-group}) if all Cayley digraphs (Cayley graphs) on $G$ are DCI-graphs  (CI-graphs). Clearly, a DCI-group is always a CI-group, but the converse is not always right. The investigation of CI-graphs (DCI-graphs) or CI-groups (DCI-groups) stems from a conjecture proposed by \'{A}d\'{a}m\cite{Adam}: all circulant graphs are CI-graphs of the corresponding cyclic groups. This conjecture was disproved by Elspas and Turner \cite{Elspas}, and however, the conjecture caused a lot of activity on the characterization of CI-graphs (DCI-graphs) or CI-groups (DCI-groups) \cite{Babai1,Dobson,Dobson1,Dobson2,Dobson3,Dobson4,Huang,Huang1,Huang2,Hirasaka,Kovacs,Li2,Morris,Muzychuk,Muzychuk1,Somlai}. Another motivation for investigating CI-graphs (DCI-graphs) or CI-groups (DCI-groups) is to determine  and enumerate the isomorphic classes of Cayley graphs for a given group. By the definition, if $\mathrm{Cay}(G,S)$ is a CI-graph (DCI-graph), then to decide whether or not $\mathrm{Cay}(G,S)$ is isomorphic to $\mathrm{Cay}(G,T)$, we only need to decide whether or not there exists an automorphism $\alpha\in \mathrm{Aut}(G)$ such that $\alpha(S)=T$. For this reason, Mishna \cite{Mishna} (see also \cite{Alspach}) applied the P\'{o}lya enumeration theorem to count the isomorphic classes of  Cayley graphs (Cayley digraphs)  on some CI-groups (DCI-groups). Also, the isomorphic classes of some families of Cayley graphs which are edge-transitive but not arc-transitive were determined in \cite{Li1,Wang,Xu1}. For more results about CI-problem (DCI-problem) and determination for isomorphic classes of Cayley graphs, we refer the reader to the review paper \cite{Li} and references therein.

Let $p$ be an odd prime, and $D_{2p}=\langle \tau,\sigma\mid \tau^p=\sigma^2=e,\sigma\tau\sigma=\tau^{-1}\rangle$ the dihedral group of order $2p$. In \cite{Babai1}, Babai showed that $D_{2p}$ is a DCI-group. This remind us the  P\'{o}lya enumeration theorem could be used to enumerate the isomorphic classes of Cayley (di-)graphs of $D_{2p}$. In this paper, inspired by Mishna's work \cite{Mishna},  we obtain the number of (connected) Cayley (di-)graphs  on $D_{2p}$ up to isomorphism.  In the process, we also enumerate (connected)  Cayley digraphs on $D_{2p}$ of out-degree $k$ up to isomorphism for each $k$.

\section{Main tools}\label{s-2}
Let $G$ be a finite group and  $X$  a finite set. A (\textit{left}) \textit{action} of $G$ on $X$, denoted by $(G,X)$, is a map from $G\times X$ to $X$, with the image of $(g,x)$ being denoted by $gx$, which satisfies the following two conditions:
\begin{enumerate}[(1)]
\item $ex=x$ for all $x\in X$, where $e$ is the identity of $G$;
\item $(gh)x=g(hx)$ for all $g,h \in G$ and all $x \in X$. 
\end{enumerate}
If there is an action of $G$ on $X$, then we say that $G$ \textit{acts} on $X$ or that $X$ is a \textit{$G$-set}. The action $(G,X)$ is called \textit{faithful} if the only element $g\in G$ satisfying $gx=x$ for every $x\in X$ is the identity element. Let $X$ be a $G$-set. For each $x\in X$, we define the \textit{orbit} of $x$ to be the subset $Gx=\{gx\mid g\in G\}$ of $X$, and we define the \textit{stabilizer} of $x$ to be the subset $G_x=\{g\in G\mid gx=x\}$ of $G$.  It is easy to see that all the orbits form a partition of $X$ and that $G_x$ is a subgroup of $G$.

Let $G$ be a permutation group on $X$ ($|X|=n$). Then $G$  acts on $X$ naturally by defining $gx=g(x)$. For $g\in G$, we denote by $b_k(g)$ the number of cycles of length $k$ in the disjoint cycle decomposition of $g$, where $k=1,2,\ldots,n$. Then the \textit{cycle type} of $g\in G$ is defined as $\mathrm{type}(g)=(b_1(g),b_2(g),\ldots,b_n(g))$. Clearly, $b_1(g)+2b_2(g)+\cdots+nb_n(g)=n$. The \textit{cycle index} $\mathcal{I}(G,X)$ of the permutation group $G$ acting on $X$ is defined to be the polynomial
\begin{equation}\label{eq-1}
\mathcal{I}(G,X)=P_G(x_1,x_2,\ldots,x_n)=\frac{1}{|G|}\sum_{g\in G}{x_1^{b_1(g)}x_2^{b_2(g)}\cdots x_n^{b_n(g)}},
\end{equation}
where $x_1,x_2,\ldots,x_n$ are indeterminates. 

Let $A$ and $C$ be finite sets. Denote by 
\begin{equation}\label{eq-2}
C^A=\{f\mid f:A\rightarrow C\}
\end{equation}
the set of all maps from $A$ to $C$. Let $G$ be a permutation group acting on $A$. Then we obtain a group action $(G,C^A)$  naturally by setting:
\begin{equation}\label{eq-3}
gf=f\circ g^{-1}~~\mbox{for every $g\in G$ and $f\in C^A$},
\end{equation}
where $f\circ g^{-1}$ denotes the composite of the maps $f$ and $g^{-1}$. Under the group action $(G,C^A)$, two maps in $C^A$  are said to be \textit{$G$-equivalent} if they belong to the same orbit. The P\'{o}lya Enumeration Theorem provides  the number of orbits of the group action $(G,C^A)$. 

\begin{lem}(P\'{o}lya Enumeration Theorem, see \cite{Harary}, Chapter 2.)\label{polya-1}
Let $A$ and $C$ be finite sets with $|A|=n$ and $|C|=m$. Let $G$ be a permutation group acting on $A$. Denote by $\mathcal{F}$  the set  of all orbits of  the group action $(G,C^A)$. Then
\begin{equation}\label{eq-4}
|\mathcal{F}|=P_G(m,m,\ldots,m),
\end{equation}
where $P_G(x_1,x_2,\ldots,x_n)$ is the cycle index of $(G,A)$ defined in (\ref{eq-1}).
\end{lem}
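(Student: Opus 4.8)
The plan is to derive Lemma~\ref{polya-1} from the orbit-counting lemma (Burnside / Cauchy--Frobenius), which asserts that for any action of a finite group $G$ on a finite set $Y$, the number of orbits equals $\frac{1}{|G|}\sum_{g\in G}|\mathrm{Fix}_Y(g)|$, where $\mathrm{Fix}_Y(g)=\{y\in Y:gy=y\}$. Here I would take $Y=C^A$ with the action defined in~(\ref{eq-3}). If a fully self-contained treatment is wanted, the orbit-counting lemma itself follows by double counting the set $\{(g,y):gy=y\}$ together with the orbit--stabilizer relation $|Gy|\cdot|G_y|=|G|$; but since it is classical I would simply invoke it. One preliminary point worth recording is that~(\ref{eq-3}) is a genuine left action, since $(gh)f=f\circ(gh)^{-1}=f\circ h^{-1}\circ g^{-1}=g(hf)$ --- this is exactly why the inverse appears in the definition --- so the orbit-counting lemma does apply.

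The crux is to identify $\mathrm{Fix}_{C^A}(g)$ for a fixed $g\in G$. By~(\ref{eq-3}), $gf=f$ means $f\circ g^{-1}=f$, i.e. $f(a)=f(g(a))$ for all $a\in A$; iterating, $f$ must be constant on every orbit of the cyclic group $\langle g\rangle$ acting on $A$. These orbits are precisely the cycles appearing in the disjoint cycle decomposition of $g$ (a cycle of length $k$ being a $\langle g\rangle$-orbit of size $k$), and conversely any map constant on each such cycle is fixed by $g$. Hence a fixed map is determined by one free choice of a value in $C$ for each cycle, so
\[
|\mathrm{Fix}_{C^A}(g)|=m^{\,c(g)},\qquad c(g):=b_1(g)+b_2(g)+\cdots+b_n(g),
\]
the total number of cycles of $g$ (fixed points included).

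Finally I would substitute this into the orbit-counting formula:
\[
|\mathcal{F}|=\frac{1}{|G|}\sum_{g\in G}m^{c(g)}=\frac{1}{|G|}\sum_{g\in G}m^{b_1(g)}m^{b_2(g)}\cdots m^{b_n(g)},
\]
and observe that the right-hand side is exactly the cycle index $P_G(x_1,\dots,x_n)$ of~(\ref{eq-1}) evaluated at $x_1=x_2=\cdots=x_n=m$, which gives $|\mathcal{F}|=P_G(m,m,\ldots,m)$, as claimed.

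There is no deep obstacle here; the only step that genuinely requires care is the bookkeeping in the middle paragraph --- the equivalence ``fixed by $g$'' $\iff$ ``constant on the cycles of $g$,'' and the fact that the resulting exponent is the total cycle count $c(g)=\sum_k b_k(g)$ rather than the count of cycles of some particular length. Everything else is a direct substitution.
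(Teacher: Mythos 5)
Your proof is correct: the identification of the fixed maps of $g$ in $C^A$ as those constant on the cycles of $g$, giving $|\mathrm{Fix}_{C^A}(g)|=m^{b_1(g)+\cdots+b_n(g)}$, together with the Cauchy--Frobenius orbit-counting lemma, is exactly the classical derivation of this special case of P\'{o}lya's theorem. The paper itself offers no proof --- it simply cites Harary and Palmer, Chapter 2 --- and your argument is essentially the standard one found there, so there is nothing to flag; the check that (\ref{eq-3}) is a genuine left action is a nice touch of completeness.
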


Let $G$ be a permutation group acting on $A$.  Two $k$-subsets $S$ and $T$ of $A$ are said to be \textit{$G$-equivalent} if there exists some $g\in G$ such that $g(S)=T$. The following result enumerates the $G$-equivalent $k$-subsets of $A$.
\begin{lem}(See \cite{Harary}, Chapter 2.)\label{polya-2}
Let $A$  be a finite set with $|A|=n$, and let $G$ be a permutation group acting on $A$.  Then the number of $G$-equivalent classes of $k$-subsets of $A$ is equal to the coefficient of $x^k$ in the polynomial $P_G(1+x,1+x^2,\ldots,1+x^n)$, 
where $P_G(x_1,x_2,\ldots,x_n)$ is the cycle index of $(G,A)$ defined in (\ref{eq-1}).
\end{lem}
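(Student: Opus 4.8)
The plan is to apply the Cauchy--Frobenius orbit-counting lemma (Burnside's lemma) directly to the induced action of $G$ on the family $\binom{A}{k}$ of all $k$-subsets of $A$, and then to recognize the resulting average as a coefficient in the claimed polynomial. First I would note that $G$ acts on $\binom{A}{k}$ via $g\cdot S=g(S)$ (this is well defined since $g$ permutes $A$), that two $k$-subsets are $G$-equivalent precisely when they lie in a common orbit, and hence that the number to be computed is the number of orbits of this action, which by Burnside's lemma equals $\frac{1}{|G|}\sum_{g\in G}|\mathrm{Fix}_k(g)|$, where $\mathrm{Fix}_k(g)=\{S\in\binom{A}{k}\mid g(S)=S\}$.

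The key structural step is to describe $\mathrm{Fix}_k(g)$ for a fixed $g\in G$. Writing $g$ as a product of disjoint cycles on $A$, I would show that a subset $S\subseteq A$ satisfies $g(S)=S$ if and only if $S$ is a union of some of these cycles: if $S$ contains a point $a$ lying on a cycle $(a\;\,g(a)\;\cdots\;g^{\ell-1}(a))$ of length $\ell$, then $g(S)=S$ forces $g(a),\dots,g^{\ell-1}(a)\in S$, so the whole cycle lies in $S$; conversely, any union of complete cycles is obviously $g$-invariant. Consequently, selecting a $g$-invariant $k$-subset amounts to selecting, for each $i$, a subcollection of the $b_i(g)$ cycles of $g$ of length $i$ so that the total length of the chosen cycles is $k$.

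Next comes the generating-function bookkeeping. Each cycle of length $i$ is either excluded from $S$ or included, contributing respectively $1$ or $x^i$ to a formal product that records $|S|$ in the exponent of $x$; hence $|\mathrm{Fix}_k(g)|$ is the coefficient of $x^k$ in $\prod_{i=1}^{n}(1+x^i)^{b_i(g)}$. Summing over $g\in G$, exchanging the (finite) sum with coefficient extraction, and dividing by $|G|$ gives
\[
\#\{G\text{-classes of }k\text{-subsets of }A\}
=[x^k]\,\frac{1}{|G|}\sum_{g\in G}\prod_{i=1}^{n}(1+x^i)^{b_i(g)}
=[x^k]\,P_G(1+x,1+x^2,\dots,1+x^n),
\]
where the last equality is merely the definition (\ref{eq-1}) of the cycle index with $x_i$ specialized to $1+x^i$. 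This is exactly the assertion of the lemma.

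Alternatively, one can deduce the statement from Lemma \ref{polya-1} by identifying a $k$-subset with its characteristic function in $\{0,1\}^A$ and using the weighted refinement of the P\'olya enumeration theorem: assigning weight $x$ to the color $1$ and weight $1$ to the color $0$ produces the pattern inventory $P_G(1+x,1+x^2,\dots,1+x^n)$, and its $x^k$-coefficient counts the orbits of functions that take the value $1$ exactly $k$ times, i.e.\ the $G$-classes of $k$-subsets. In either route the only genuine obstacle is the combinatorial identification of $g$-invariant subsets with unions of cycles of $g$; once that is established, the remainder is formal manipulation of finite polynomials.
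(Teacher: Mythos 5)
Your proof is correct. The paper offers no proof of this lemma at all---it is quoted directly from Harary and Palmer, Chapter 2---and your argument (Burnside/Cauchy--Frobenius on the induced action on $k$-subsets, the observation that a $g$-invariant subset must be a union of complete cycles of $g$, and the resulting identification of $|\mathrm{Fix}_k(g)|$ with $[x^k]\prod_i(1+x^i)^{b_i(g)}$, equivalently the weighted P\'olya theorem with figure inventory $1+x$) is exactly the standard derivation the citation refers to, with the one genuinely substantive step, the cycle-union characterization of invariant subsets, correctly justified.
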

 
%

Recall that a group $G$ is  a DCI-group if all Cayley digraphs on $G$ are DCI-graphs. The following result due to Babai \cite{Babai1} shows that the dihedral group $D_{2p}$ ($p$ prime) is a DCI-group.
\begin{lem}(See \cite{Babai1}.)\label{DCI}
The two Cayley digraphs $\mathrm{Cay}(D_{2p},S)$ and $\mathrm{Cay}(D_{2p},T)$ on $D_{2p}$ ($p$ prime) are isomorphic
if and only if there exists some $\alpha\in \mathrm{Aut}(D_{2p})$ such that  $\alpha(S)=T$.
\end{lem}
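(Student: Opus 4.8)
The plan is to deduce the statement from Babai's general isomorphism criterion together with the very restricted structure of $D_{2p}$ as a transitive permutation group of degree $2p$, so that the problem becomes one about regular subgroups of a single automorphism group.

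First I would record Babai's criterion, whose proof is a short averaging (Burnside-type) argument: a finite group $G$ is a DCI-group precisely when, for every Cayley digraph $\Gamma=\mathrm{Cay}(G,S)$, any two regular subgroups of $A:=\mathrm{Aut}(\Gamma)$ that are isomorphic to $G$ are conjugate in $A$. Indeed, writing $R(G)\le A$ for the right regular representation, an isomorphism $\phi:\mathrm{Cay}(G,S)\to\mathrm{Cay}(G,T)$ conjugates $R(G)$ to another regular subgroup $R(G)^{\phi}\le A$ isomorphic to $G$; if some $\psi\in A$ conjugates $R(G)^{\phi}$ back to $R(G)$, then $\phi\psi$ is an isomorphism normalising $R(G)$, and every such isomorphism is a Cayley isomorphism, i.e. is induced by some $\alpha\in\mathrm{Aut}(G)$ with $\alpha(S)=T$. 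So it suffices to prove: in $A=\mathrm{Aut}(\mathrm{Cay}(D_{2p},S))$, all regular subgroups isomorphic to $D_{2p}$ are conjugate. Fix such a $\Gamma$ and let $R=R(D_{2p})$ and $R'$ be two regular subgroups of $A$ with $R\cong R'\cong D_{2p}$, with Sylow $p$-subgroups $P\le R$, $P'\le R'$; each is cyclic of order $p$ and, as a permutation group on the $2p$ vertices, acts as a product of two disjoint $p$-cycles, one on each coset of $\langle\tau\rangle$.

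The first step is to show $P$ and $P'$ are conjugate in $A$. If $p^{2}\nmid|A|$ this is immediate from Sylow's theorem, since then $P,P'$ are Sylow $p$-subgroups of $A$. If $p^{2}\mid|A|$, then, as $A\le S_{2p}$ and $p$ is odd, a Sylow $p$-subgroup of $A$ is elementary abelian of order $p^{2}$ (two disjoint $p$-cycles) having the two cosets of $\langle\tau\rangle$ as its orbits, and a direct analysis shows that $\Gamma$ is then a "blow-up'' of small graphs with $A$ containing $S_{p}\times S_{p}$ or $S_{p}\wr S_{2}$; in those few explicit cases the conjugacy of $P,P'$, indeed of $R,R'$, is checked by hand. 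Granting $P=P'$, both $R,R'\le N:=N_{A}(P)$, and since $P$ has two orbits of size $p$, $N$ preserves this partition of the vertex set into two blocks. Restricting the block-stabiliser to one block gives a transitive permutation group of prime degree $p$ containing a $p$-cycle, which by Burnside's theorem is either contained in $\mathrm{AGL}(1,p)$ or is $2$-transitive. In the first ("tame'') alternative $N$ is metacyclic-by-$S_{2}$, its Sylow $p$-subgroup is $P$, and one verifies that $R$ and $R'$ — both regular, both with Sylow $p$-subgroup $P$, each containing an involution inverting $P$ — are conjugate by an element of $N$, which completes this case.

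The hard part will be the $2$-transitive alternative (and the closely related case $p^{2}\mid|A|$): here one must identify exactly which connection sets $S$ produce such large automorphism groups. For undirected graphs the resulting list is very short — essentially the empty and complete graphs, $K_{p,p}$ and its complement $K_{p}\cup K_{p}$, and one or two near-relatives — each of which is checked to be a CI-graph directly; in the directed case the list is a little longer, and one invokes the classification of $2$-transitive groups of prime degree $p$ (again ultimately Burnside's theorem, via the socle) to exclude exotic overgroups and to pin down $S$. Once these finitely many exceptional families have been dispatched, the tame Burnside alternative applies to every remaining $\Gamma$, so all regular $D_{2p}$-subgroups of $\mathrm{Aut}(\Gamma)$ are conjugate, and Babai's criterion yields the theorem.
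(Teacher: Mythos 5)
The paper does not prove this lemma at all: it is quoted verbatim from Babai \cite{Babai1}, so there is no internal proof to compare against, and your sketch must stand on its own. Its general strategy is the right one (Babai's criterion reduces the DCI property of $\mathrm{Cay}(D_{2p},S)$ to showing that all regular subgroups of $A=\mathrm{Aut}(\mathrm{Cay}(D_{2p},S))$ isomorphic to $D_{2p}$ are conjugate in $A$, after which one works with Sylow $p$-subgroups and Burnside's dichotomy for transitive groups of prime degree), but the two steps that carry essentially all of the difficulty are asserted rather than proved, and one of them is false as stated.

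Concretely: (i) your claim that $p^{2}\mid|A|$ forces $A$ to contain $S_p\times S_p$ or $S_p\wr S_2$, with $\Gamma$ a ``blow-up'' belonging to a short explicit list, is wrong. For any $S\subseteq\langle\tau\rangle\setminus\{e\}$ the digraph $\mathrm{Cay}(D_{2p},S)$ is two disjoint copies of a circulant digraph, so $A$ contains $\mathbb{Z}_p\wr S_2$ and $p^{2}\mid|A|$; for instance $\mathrm{Cay}(D_{2p},\{\tau\})$ is two disjoint directed $p$-cycles with $|A|=2p^{2}$, containing no $S_p\times S_p$. Thus the intended reduction of the $p^{2}\mid|A|$ case to ``finitely many exceptional graphs checked by hand'' collapses, and it is exactly these imprimitive wreath-type digraphs (together with, e.g., $K_{p,p}$-like connection sets $S\supseteq\langle\tau\rangle\sigma$) where the conjugacy of regular dihedral subgroups genuinely has to be argued. (ii) The doubly transitive alternative is not dispatched: ``the list is a little longer \ldots one invokes the classification of $2$-transitive groups of prime degree'' is not an argument, and that classification in full depends on the classification of finite simple groups, a far heavier tool than Babai's elementary 1977 proof; likewise the ``tame'' case (``one verifies that $R$ and $R'$ are conjugate by an element of $N$'') is precisely the computation that needs to be written out inside $N_A(P)$, whose block restrictions lie in $\mathrm{AGL}(1,p)$. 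As it stands the proposal is a plausible roadmap, not a proof; the honest options are either to cite Babai's theorem, as the paper does, or to actually carry out the full conjugacy analysis, including the wreath/imprimitive cases noted above.
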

In \cite{Rotmaler}, Rotmaler determined  the automorphism group of the dihedral group $D_{2n}$.
\begin{lem}(See \cite{Rotmaler}.)\label{Aut}
Suppose $n > 2$ is an integer and $D_{2n}=\langle\tau,\sigma\mid \tau^n=\sigma^2=e, \sigma\tau\sigma=\tau^{-1}\rangle=\{\tau^i,\tau^j\sigma\mid i,j \in \mathbb{Z}_n\}$ is the dihedral group of order $2n$. Then $\mathrm{Aut}(D_{2n})=\{\alpha_{s,t}\mid s\in \mathbb{Z}_n^\times, t\in \mathbb{Z}_n\}\cong \mathbb{Z}_n^\times\ltimes \mathbb{Z}_n$, where $\alpha_{s,t}(\tau^i)=\tau^{si}$ and $\alpha_{s,t}(\tau^j\sigma)=\tau^{sj+t}\sigma$ for all $i,j\in \mathbb{Z}_n$.
\end{lem}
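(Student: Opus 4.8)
The plan is to first show that the rotation subgroup $\langle\tau\rangle$ is characteristic in $D_{2n}$, then read off the possible images of the generators $\tau$ and $\sigma$ under an arbitrary automorphism, and finally verify that the resulting family $\{\alpha_{s,t}\}$ is closed under composition with exactly the semidirect-product multiplication law.

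\emph{Step 1 (the one nontrivial step): $\langle\tau\rangle$ is characteristic.} Every element of $D_{2n}$ outside $\langle\tau\rangle$ has the form $\tau^j\sigma$, and $(\tau^j\sigma)^2=\tau^j(\sigma\tau^j\sigma)=\tau^j\tau^{-j}=e$, so it has order $2$. Since $n>2$, no such element can generate a cyclic subgroup of order $n$; hence $\langle\tau\rangle$ is the \emph{unique} cyclic subgroup of order $n$ in $D_{2n}$. Any automorphism $\phi$ sends $\langle\tau\rangle$ to $\langle\phi(\tau)\rangle$, again cyclic of order $n$, so $\phi(\langle\tau\rangle)=\langle\tau\rangle$. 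This is the only place where the hypothesis $n>2$ is used (for $n=2$, $D_4\cong\mathbb{Z}_2\times\mathbb{Z}_2$ and the statement fails), and it is essentially the only non-formal ingredient of the proof.

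\emph{Step 2 (form of an automorphism).} Let $\phi\in\mathrm{Aut}(D_{2n})$. By Step 1, $\phi|_{\langle\tau\rangle}$ is an automorphism of $\langle\tau\rangle\cong\mathbb{Z}_n$, so $\phi(\tau)=\tau^s$ for some $s\in\mathbb{Z}_n^\times$; and since $\phi$ maps the complement $D_{2n}\setminus\langle\tau\rangle$ onto itself, $\phi(\sigma)=\tau^t\sigma$ for some $t\in\mathbb{Z}_n$. Then $\phi(\tau^i)=\tau^{si}$ and $\phi(\tau^j\sigma)=\tau^{sj+t}\sigma$, i.e. $\phi=\alpha_{s,t}$. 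Conversely, to see that each $\alpha_{s,t}$ genuinely is an automorphism I would use the given presentation: the assignment $\tau\mapsto\tau^s$, $\sigma\mapsto\tau^t\sigma$ respects the three defining relations (a short computation yields $(\tau^s)^n=e$, $(\tau^t\sigma)^2=e$, and $(\tau^t\sigma)(\tau^s)(\tau^t\sigma)=\tau^{-s}$), so it extends to an endomorphism of $D_{2n}$; it is surjective because $\gcd(s,n)=1$ makes $\tau^s$ a generator of $\langle\tau\rangle$, and a surjective endomorphism of a finite group is bijective.

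\emph{Step 3 (identifying the group).} It remains to compute the composition law: evaluating on $\tau$ and $\sigma$ gives $\alpha_{s,t}\circ\alpha_{s',t'}=\alpha_{ss',\,t+st'}$, which is precisely the multiplication of $\mathbb{Z}_n^\times\ltimes\mathbb{Z}_n$ with $\mathbb{Z}_n^\times$ acting on $\mathbb{Z}_n$ by multiplication. Hence $(s,t)\mapsto\alpha_{s,t}$ is a bijective homomorphism $\mathbb{Z}_n^\times\ltimes\mathbb{Z}_n\to\mathrm{Aut}(D_{2n})$, which establishes both the explicit description and the claimed isomorphism (and, incidentally, $|\mathrm{Aut}(D_{2n})|=n\,\varphi(n)$). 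No step poses a real obstacle; the only point requiring care is the appeal to $n>2$ in Step 1.
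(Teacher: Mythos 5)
Your proof is correct. Note that the paper itself gives no argument for this lemma---it is quoted from Rotmaler's paper \cite{Rotmaler}---so there is no internal proof to compare against; your write-up supplies the standard argument and it is complete. The three ingredients all check out: the order computation $(\tau^j\sigma)^2=e$ together with $n>2$ does force $\phi(\tau)\in\langle\tau\rangle$, hence $\langle\tau\rangle$ is characteristic and $\phi=\alpha_{s,t}$ with $s\in\mathbb{Z}_n^\times$, $t\in\mathbb{Z}_n$; the relation check $(\tau^t\sigma)\tau^s(\tau^t\sigma)=\tau^{-s}$ and the surjectivity argument show every $\alpha_{s,t}$ is indeed an automorphism; and the composition law $\alpha_{s,t}\circ\alpha_{s',t'}=\alpha_{ss',\,t+st'}$ is exactly the multiplication in $\mathbb{Z}_n^\times\ltimes\mathbb{Z}_n$ (with $\mathbb{Z}_n$ the normal factor), giving the claimed isomorphism and $|\mathrm{Aut}(D_{2n})|=n\varphi(n)$, which is the value the paper uses later.
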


\section{Enumerating Cayley digraphs on $D_{2p}$}\label{s-3}
Let $D_{2p}=\langle\tau,\sigma\mid \tau^p=\sigma^2=e, \sigma\tau\sigma=\tau^{-1}\rangle=\{\tau^i,\tau^j\sigma\mid i,j \in \mathbb{Z}_p\}$ be the dihedral group of order $2p$  ($p$ is an odd prime).  Take $A=D_{2p}\setminus\{e\}=\{\tau^i,\tau^j\sigma\mid i\in \mathbb{Z}_p\setminus\{0\}, j\in\mathbb{Z}_p\}$ and $C=\{0,1\}$. Then $\mathrm{Aut}(D_{2p})$ is a permutation group acting on $A$ and $C^A$. For $S\subseteq A$, let $f_S$ denote the characteristic function of $S$, that is, $f_S(a)=1$ if $a\in S$, and $f_S(a)=0$ if $a\in A\setminus S$. Clearly, $f_S\in C^A$ and $C^A$ consists of all characteristic  functions on $A$. By Lemma \ref{DCI}, we know that two Cayley digraphs  $\mathrm{Cay}(D_{2p},S)$ and $\mathrm{Cay}(D_{2p},T)$ on $D_{2p}$  are isomorphic if and only if there exists some $\alpha\in \mathrm{Aut}(D_{2p})$ such that  $\alpha(S)=T$, which is the case if and only if $f_S,f_T\in C^A$ are $\mathrm{Aut}(D_{2p})$-equivalent. Thus the number of Cayley digraphs on $D_{2p}$ up to isomorphism is equal to the number of orbits of the group action $(\mathrm{Aut}(D_{2p}),C^A)$. Therefore, by Lemma \ref{polya-1},
in order to enumerate Cayley digraphs on $D_{2p}$, we first need to compute the cycle index of the permutation group $\mathrm{Aut}(D_{2p})$ acting $A$. 

 By Lemma \ref{Aut}, we have $\mathrm{Aut}(D_{2p})=\{\alpha_{s,t}\mid s\in \mathbb{Z}_p^\times, t\in \mathbb{Z}_p\}$, where $\alpha_{s,t}(\tau^i)=\tau^{si}$ and $\alpha_{s,t}(\tau^j\sigma)=\tau^{sj+t}\sigma$ for all $i,j\in \mathbb{Z}_p$. Putting $A_1=\langle\tau\rangle\setminus\{e\}=\{\tau^i\mid i\in \mathbb{Z}_p\setminus\{0\}\}$ and $A_2=\langle\tau\rangle\sigma=\{\tau^j\sigma\mid j\in \mathbb{Z}_p\}$. Then $A=A_1\cup A_2$, and furthermore,  we observe that $\alpha_{s,t}(A_1)=A_1$ and $\alpha_{s,t}(A_2)=A_2$ for each $\alpha_{s,t}\in \mathrm{Aut}(D_{2p})$.
 
Also, since $p$ is an odd prime, we know that $\mathbb{Z}_p^\times=\{z\in \mathbb{Z}_p\mid \mathrm{gcd}(z,p)=1\}=\mathbb{Z}_p\setminus\{0\}$ is a cyclic group of order $p-1$ with the multiplication of integers module $p$. Thus we can assume that $\mathbb{Z}_p^\times=\langle z\rangle$ for some integer $z \in \mathbb{Z}_p^\times=\mathbb{Z}_p\setminus\{0\}$. Then for any $s\in\mathbb{Z}_p^\times$, there exists some $i_s\in\mathbb{Z}_{p-1}$ such that $s=z^{i_s}$. For example,  $i_s=0$ when $s=1$ and $i_s=\frac{p-1}{2}$ when $s=-1$. Furthermore, if  $s$ ranges over all elements of $\mathbb{Z}_p^\times$ then $i_s$ ranges over all elements of $\mathbb{Z}_{p-1}$. The following lemma is crucial to the calculation of cycle index.  

\begin{lem}\label{cycle-index-1}
Let $A=D_{2p}\setminus\{e\}$ and $\alpha_{s,t}\in \mathrm{Aut}(D_{2p})$ be defined as above. Let $z$ be a generating element of the cyclic group $\mathbb{Z}_p^\times$. Under the action of $\mathrm{Aut}(D_{2p})$ on $A$, the cycle type of $\alpha_{s,t}$ is given by  $type(\alpha_{s,t})=(b_1(\alpha_{s,t}),b_2(\alpha_{s,t}),\ldots,b_{2p-1}(\alpha_{s,t}))$, where 
\begin{equation}\label{eq-8}
b_k(\alpha_{1,t})=\left\{
\begin{aligned}
&2p-1&~&\mbox{if }k=1\mbox{ and } t=0,\\
&p-1&~&\mbox{if }k=1\mbox{ and } t\in\mathbb{Z}_p\setminus\{0\},\\
&1&~&\mbox{if } k=p \mbox{ and }t\in\mathbb{Z}_p\setminus\{0\},\\
&0&~&\mbox{otherwise},
\end{aligned}
\right.
\end{equation}
and for each $1\neq s=z^{i_s}\in\mathbb{Z}_p^\times$ (i.e., $i_s\neq 0$) and $t\in\mathbb{Z}_p$,
\begin{equation}\label{eq-9}
b_k(\alpha_{s,t})=b_k(\alpha_{s,0})=\left\{
\begin{aligned}
&1&~&\mbox{if }k=1,\\
&2\cdot\mathrm{gcd}(i_s,p-1)&~&\mbox{if } k=\frac{p-1}{\mathrm{gcd}(i_s,p-1)} ,\\
&0&~&\mbox{otherwise}.
\end{aligned}
\right.
\end{equation}
\end{lem}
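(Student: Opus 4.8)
The plan is to use the observation made just before the statement that every $\alpha_{s,t}$ preserves both $A_1=\langle\tau\rangle\setminus\{e\}$ (of size $p-1$) and $A_2=\langle\tau\rangle\sigma$ (of size $p$). Since $A=A_1\cup A_2$ is a disjoint union of $\alpha_{s,t}$-invariant sets, the cycle decomposition of $\alpha_{s,t}$ on $A$ is just the concatenation of its cycle decomposition on $A_1$ with that on $A_2$. I would therefore analyze the two restrictions separately and then merge the cycle counts.

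On $A_1$ the map is $\tau^i\mapsto\tau^{si}$, which does not involve $t$, so it coincides with multiplication by $s$ on $\mathbb{Z}_p^\times=\mathbb{Z}_p\setminus\{0\}$. Writing $s=z^{i_s}$ and transporting along the group isomorphism $\mathbb{Z}_p^\times\to\mathbb{Z}_{p-1}$, $z^j\mapsto j$, this permutation becomes the translation $j\mapsto j+i_s$ on $\mathbb{Z}_{p-1}$. A translation by $c$ on a cyclic group of order $n$ splits into $\mathrm{gcd}(c,n)$ cycles, each of length $n/\mathrm{gcd}(c,n)$ (the orbits being the cosets of $\langle c\rangle$). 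Hence on $A_1$ we obtain $p-1$ fixed points when $i_s=0$, and $\mathrm{gcd}(i_s,p-1)$ cycles of common length $\ell:=(p-1)/\mathrm{gcd}(i_s,p-1)$ when $i_s\neq0$; note that $\ell\geq2$ in the latter case, since $\ell=1$ would force $(p-1)\mid i_s$, i.e.\ $i_s=0$ in $\mathbb{Z}_{p-1}$.

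On $A_2$ the map is $\tau^j\sigma\mapsto\tau^{sj+t}\sigma$, i.e.\ the affine transformation $\phi\colon j\mapsto sj+t$ of $\mathbb{Z}_p$. If $s=1$ this is the translation $j\mapsto j+t$ of $\mathbb{Z}_p$, giving $p$ fixed points when $t=0$ and, since $p$ is prime, a single $p$-cycle when $t\neq0$. If $s\neq1$, then $j=sj+t$ has the unique solution $j_0=-t(s-1)^{-1}$ in $\mathbb{Z}_p$, and conjugating $\phi$ by the translation $j\mapsto j+j_0$ turns it into the linear map $j\mapsto sj$, which fixes $0$ and permutes $\mathbb{Z}_p^\times$ exactly as multiplication by $s$ did above; hence for $s\neq1$ the restriction to $A_2$ consists of one fixed point together with $\mathrm{gcd}(i_s,p-1)$ cycles of length $\ell$, independently of $t$.

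It then remains to assemble the two contributions. For $\alpha_{1,t}$, combining the $A_1$- and $A_2$-data yields $2p-1$ fixed points when $t=0$ and $p-1$ fixed points plus one $p$-cycle when $t\neq0$, which is precisely (\ref{eq-8}). For $1\neq s=z^{i_s}$, the $A_1$-part contributes $\mathrm{gcd}(i_s,p-1)$ cycles of length $\ell$ and the $A_2$-part contributes one fixed point and a further $\mathrm{gcd}(i_s,p-1)$ cycles of length $\ell$, so $b_1(\alpha_{s,t})=1$, $b_\ell(\alpha_{s,t})=2\,\mathrm{gcd}(i_s,p-1)$, and all other $b_k$ vanish because $\ell\neq1$; this is (\ref{eq-9}), and the independence from $t$ is manifest throughout. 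This last point — that $\ell\neq1$ when $i_s\neq0$, so that the lone fixed point from $A_2$ is not conflated with spurious length-$1$ cycles — is the only place where a little care is needed; everything else is routine bookkeeping about translations and affine maps on cyclic groups, so I do not anticipate a real obstacle.
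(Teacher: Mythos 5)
Your proposal is correct and follows essentially the same route as the paper: decompose $A$ into the invariant sets $A_1$ and $A_2$, treat $s=1$ directly, and for $s\neq 1$ eliminate $t$ by conjugating with a translation (the paper's bijection $\beta\colon\tau^j\sigma\mapsto\tau^{j+(1-s)^{-1}t}\sigma$ is exactly your conjugation by the translation to the fixed point), then count orbits of multiplication by $s$, whose common length $\frac{p-1}{\gcd(i_s,p-1)}$ the paper obtains from the order of $s$ rather than via your discrete-log/translation reformulation. Your explicit remark that $\ell\geq 2$ when $i_s\neq 0$ is a small but welcome clarification that the paper leaves implicit.
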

\begin{proof}
Let $A_1$ and $A_2$ be defined as above. Since $\alpha_{s,t}(A_1)=A_1$ and $\alpha_{s,t}(A_2)=A_2$ for each $\alpha_{s,t}\in \mathrm{Aut}(D_{2p})$, we must have $b_{p+1}(\alpha_{s,t})=\dots=b_{2p-1}(\alpha_{s,t})=0$. For $\alpha_{s,t}\in \mathrm{Aut}(D_{2p})$, we consider the following two situations.

\textit{Case 1.} $s=1$;

Note that $\alpha_{1,t}(\tau^i)=\tau^i$ for each $i\in \mathbb{Z}_p\setminus\{0\}$, so the permutation $\alpha_{1,t}$ splits $A_1$ into $p-1$ cycles each of length $1$.  Also note that $\alpha_{1,t}(\tau^j\sigma)=\tau^{j+t}\sigma$ for $j\in \mathbb{Z}_p$. If $t=0$, then $\alpha_{1,0}(\tau^j\sigma)=\tau^j\sigma$ for each $j\in\mathbb{Z}_p$, and so $\alpha_{1,t}$ splits $A_2$ into $p$ cycles of length $1$. If $t\in\mathbb{Z}_p\setminus\{0\}$,  the order of $t$ in $\mathbb{Z}_p$ is equal to $o(t)=\frac{p}{\mathrm{gcd}(t,p)}=p$. Then,  for any $j\in \mathbb{Z}_p$, $\tau^j\sigma\in A_2$ is in the cycle $(\tau^j\sigma,\alpha_{1,t}(\tau^j\sigma),\alpha_{1,t}^2(\tau^j\sigma),\ldots,\alpha_{1,t}^{o(t)-1}(\tau^j\sigma))=(\tau^j\sigma,\tau^{j+t}\sigma,\tau^{j+2t}\sigma,\ldots,$ $\tau^{j+(p-1)t}\sigma)$. Thus the permutation $\alpha_{1,t}$ splits $A_2$ into exactly one cycle, which is  of length $p$. Therefore, we have obtain the cycle type of $\alpha_{1,t}$, as shown in (\ref{eq-8}).

\textit{Case 2.} $s\neq1$, say $s=z^{i_s}$ ($i_s\neq 0$).

Note that $\alpha_{s,t}(\tau^i)=\tau^{si}$ for $i\in \mathbb{Z}_p\setminus\{0\}$ and $\alpha_{s,t}(\tau^j\sigma)=\tau^{sj+t}\sigma$ for $j\in \mathbb{Z}_p$. Firstly, we shall prove that  $\alpha_{s,t}$ has the same cycle type as $\alpha_{s,0}$ for each $t\in\mathbb{Z}_p$. As $\alpha_{s,t}(\tau^i)=\tau^{si}=\alpha_{s,0}(\tau^i)$, $\alpha_{s,t}$ and $\alpha_{s,0}$ have the same cycle type in $A_1$. Now consider $\alpha_{s,t}$ and $\alpha_{s,0}$ acting on $A_2$. As $s\neq 1$, $(1-s)$ is invertible in $\mathbb{Z}_p^\times$,  we define a bijection in $A_2$, that is, $\beta: \tau^j\sigma\mapsto\tau^{j+(1-s)^{-1}t}\sigma$ for $j\in\mathbb{Z}_p$. Assume that $(\tau^{a_1}\sigma,\tau^{a_2}\sigma,\ldots,\tau^{a_r}\sigma)$ is a cycle of $\alpha_{s,0}$, i.e., $a_{\ell}=sa_{\ell-1}$ for $\ell\in \mathbb{Z}_r$. Then we have $\beta(\tau^{a_\ell}\sigma)=\tau^{a_\ell+(1-s)^{-1}t}\sigma$, and $\alpha_{s,t}(\beta(\tau^{a_{\ell-1}}\sigma))=\alpha_{s,t}(\tau^{a_{\ell-1}+(1-s)^{-1}t}\sigma)=\tau^{s(a_{\ell-1}+(1-s)^{-1}t)+t}\sigma=\tau^{a_{\ell}+s(1-s)^{-1}t+t}\sigma=\tau^{a_{\ell}+(s-1)(1-s)^{-1}t+(1-s)^{-1}t+t}\sigma=\tau^{a_{\ell}+(1-s)^{-1}t}\sigma=\beta(\tau^{a_\ell}\sigma)$ for each $\ell\in\mathbb{Z}_r$. Thus $(\beta(\tau^{a_1}\sigma),$ $\beta(\tau^{a_2}\sigma),\ldots,\beta(\tau^{a_r}\sigma))$ is a cycle of $\alpha_{s,t}$. Therefore, $\alpha_{s,t}$ and $\alpha_{s,0}$ also have the same cycle type in $A_2$ because $\beta$ is a bijection. Hence, we just need to consider the cycle type of $\alpha_{s,0}$ in $A=A_1\cup A_2$. Since the order of $s$ in $\mathbb{Z}_p^\times$ is equal to $o(s)=o(z^{i_s})=\frac{p-1}{\mathrm{gcd}(i_s,p-1)}$,  for any $i,j\in \mathbb{Z}_p\setminus\{0\}$, $\tau^i\in A_1$ is in the cycle $(\tau^i,\alpha_{s,0}(\tau^i),\alpha_{s,0}^2(\tau^i),\ldots,\alpha_{s,0}^{o(s)-1}(\tau^i))=(\tau^i,\tau^{si},\tau^{s^2i},\ldots,$ $\tau^{s^{o(s)-1}i})$ and
 $\tau^j\sigma\in A_2$ is in the cycle $(\tau^j\sigma,\alpha_{s,0}(\tau^j\sigma),\alpha_{s,0}^2(\tau^j\sigma),\ldots,$ $\alpha_{s,0}^{o(s)-1}(\tau^j\sigma))=(\tau^j\sigma,\tau^{sj}\sigma,\tau^{s^2j}\sigma,\ldots,\tau^{s^{o(s)-1}j}\sigma)$. Also note that $\alpha_{s,0}(\tau^0\sigma)=\tau^{0}\sigma$, so $\tau^0\sigma\in A_2$ is in the cycle $(\tau^0\sigma)$. Thus the permutation $\alpha_{s,0}$ splits $A_1$ into $\frac{p-1}{o(s)}=\mathrm{gcd}(i_s,p-1)$ cycles each of length $o(s)=\frac{p-1}{\mathrm{gcd}(i_s,p-1)}$, and splits $A_2$ into $\frac{p-1}{o(s)}=\mathrm{gcd}(i_s,p-1)$ cycles each of length $o(s)=\frac{p-1}{\mathrm{gcd}(i_s,p-1)}$ and one cycle of length $1$. Then we have obtained the cycle type of $\alpha_{s,t}$, as shown in (\ref{eq-9}).
 
 We complete the proof.
\end{proof}
Let $n$ be a positive integer. The \textit{Euler's totient function} $\Phi(n)$ is the number of integers $k$ in the range $1 \leq k \leq n$ for which the greatest common divisor $\mathrm{gcd}(n, k)$ is equal to $1$. Let $n=p_1^{k_1}\cdots p_s^{k_s}$ be the  prime factorization of $n$. Then the  \textit{Euler's product formula} states that $\Phi(n)=n\prod_{i=1}^{r}(1-\frac{1}{p_i})$. According to Lemma \ref{cycle-index-1}, we now give the cycle index of $\mathrm{Aut}(D_{2p})$ acting on $A=D_{2p}\setminus\{e\}$.
\begin{lem}\label{cycle-index-2}
The cycle index of $\mathrm{Aut}(D_{2p})$ acting on $A=D_{2p}\setminus\{e\}$ is given by
\begin{equation}\label{eq-10}
\mathcal{I}(\mathrm{Aut}(D_{2p}),A)=\frac{1}{p}x_1^{p-1}(x_p-x_1^p)+\frac{1}{p-1}x_1\cdot\sum_{d\mid(p-1)}\Phi(d)x_{d}^{\frac{2(p-1)}{d}},
\end{equation}
where $\Phi(\cdot)$ denotes  the Euler's totient function.
\end{lem}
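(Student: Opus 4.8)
The plan is to substitute the cycle types from Lemma~\ref{cycle-index-1} directly into the defining formula~(\ref{eq-1}) for the cycle index and then regroup the resulting sum by the order of $s$ in $\mathbb{Z}_p^\times$. First I would record that $|\mathrm{Aut}(D_{2p})| = |\mathbb{Z}_p^\times\ltimes\mathbb{Z}_p| = p(p-1)$, so that
\[
\mathcal{I}(\mathrm{Aut}(D_{2p}),A)=\frac{1}{p(p-1)}\sum_{s\in\mathbb{Z}_p^\times}\sum_{t\in\mathbb{Z}_p}\prod_{k=1}^{2p-1}x_k^{b_k(\alpha_{s,t})}.
\]
Then I would split this double sum according to the two cases of Lemma~\ref{cycle-index-1}. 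When $s=1$, the single automorphism $\alpha_{1,0}$ contributes the monomial $x_1^{2p-1}$ by~(\ref{eq-8}), while each of the $p-1$ automorphisms $\alpha_{1,t}$ with $t\in\mathbb{Z}_p\setminus\{0\}$ contributes $x_1^{p-1}x_p$. When $s\neq1$, equation~(\ref{eq-9}) shows the cycle type of $\alpha_{s,t}$ is independent of $t$, so all $p$ choices of $t$ yield the same monomial $x_1\,x_{o(s)}^{2(p-1)/o(s)}$, where $o(s)=(p-1)/\mathrm{gcd}(i_s,p-1)$ is the order of $s$.

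The main bookkeeping step will be packaging the $s\neq1$ contributions. Since $\mathbb{Z}_p^\times$ is cyclic of order $p-1$, it contains exactly $\Phi(d)$ elements of order $d$ for each divisor $d\mid(p-1)$, and in particular $s=1$ is the unique element of order $1$. Hence
\[
\sum_{s\in\mathbb{Z}_p^\times\setminus\{1\}}x_1\,x_{o(s)}^{2(p-1)/o(s)}=x_1\Bigl(\sum_{d\mid(p-1)}\Phi(d)x_d^{2(p-1)/d}-x_1^{2(p-1)}\Bigr),
\]
where extending the divisor sum to $d=1$ reintroduces precisely the term $x_1\cdot x_1^{2(p-1)}=x_1^{2p-1}$ that then has to be subtracted. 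Multiplying this identity by $p$ (one factor for each value of $t$) and collecting all contributions over the common denominator $p(p-1)$, the $x_1^{2p-1}$ terms combine as $\frac{1}{p(p-1)}(1-p)x_1^{2p-1}=-\frac1p x_1^{2p-1}$, the $\alpha_{1,t}$ terms give $\frac{p-1}{p(p-1)}x_1^{p-1}x_p=\frac1p x_1^{p-1}x_p$, and the divisor sum gives $\frac{p}{p(p-1)}x_1\sum_{d\mid(p-1)}\Phi(d)x_d^{2(p-1)/d}=\frac1{p-1}x_1\sum_{d\mid(p-1)}\Phi(d)x_d^{2(p-1)/d}$. Rewriting $\frac1p x_1^{p-1}x_p-\frac1p x_1^{2p-1}=\frac1p x_1^{p-1}(x_p-x_1^p)$ then produces exactly~(\ref{eq-10}).

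I do not expect any genuine obstacle here: once Lemma~\ref{cycle-index-1} is in hand, the argument is purely a matter of organizing the sum. The only places to stay vigilant are the absorption (and subsequent subtraction) of the $d=1$ term of the divisor sum, the fact that for $t\neq0$ the relevant monomial is $x_1^{p-1}x_p^1$ (a single $p$-cycle, not more), and the use of $\sum_{d\mid(p-1)}\Phi(d)=p-1$, which also serves as a consistency check on the element count $p+(p-2)p=p(p-1)$.
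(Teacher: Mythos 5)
Your proposal is correct and takes essentially the same route as the paper's proof: both substitute the cycle types from Lemma~\ref{cycle-index-1} into (\ref{eq-1}), use the $t$-independence of the cycle type for $s\neq 1$ to pull out a factor of $p$, and regroup the $s\neq1$ contributions as a divisor sum weighted by $\Phi(d)$ before adding and subtracting the $d=1$ term $x_1^{2p-1}$ and simplifying. The only cosmetic difference is that you count elements of $\mathbb{Z}_p^\times$ of each order $d$ directly, while the paper reindexes over exponents $i_s\in\mathbb{Z}_{p-1}\setminus\{0\}$ and groups them by $\mathrm{gcd}(i_s,p-1)$ -- the same count phrased two ways.
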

\begin{proof}
By Lemma \ref{cycle-index-1}, the cycle index of $\mathrm{Aut}(D_{2p})$ acting on $A=D_{2p}\setminus\{e\}$ is 
\begin{eqnarray*}
&&\mathcal{I}(\mathrm{Aut}(D_{2p}),A)\\
&=&\frac{1}{|\mathrm{Aut}(D_{2p})|}\sum_{\alpha_{s,t}\in \mathrm{Aut}(D_{2p})}{x_1^{b_1(\alpha_{s,t})}x_2^{b_2(\alpha_{s,t})}\cdots x_{2p-1}^{b_{2p-1}(\alpha_{s,t})}}\\
&=&\frac{1}{p(p-1)}\sum_{s\in\mathbb{Z}_p^\times}\sum_{t\in\mathbb{Z}_p}{x_1^{b_1(\alpha_{s,t})}x_2^{b_2(\alpha_{s,t})}\cdots x_{2p-1}^{b_{2p-1}(\alpha_{s,t})}}\\
&=&\frac{1}{p(p-1)}\bigg[\sum_{t\in\mathbb{Z}_p}x_1^{b_1(\alpha_{1,t})}x_2^{b_2(\alpha_{1,t})}\cdots x_{2p-1}^{b_{2p-1}(\alpha_{1,t})}+\sum_{s\in\mathbb{Z}_p^\times\setminus\{1\}}\sum_{t\in\mathbb{Z}_p}x_1^{b_1(\alpha_{s,t})}x_2^{b_2(\alpha_{s,t})}\cdots x_{2p-1}^{b_{2p-1}(\alpha_{s,t})}\bigg]\\
&=&\frac{1}{p(p-1)}\bigg[\sum_{t\in\mathbb{Z}_p}x_1^{b_1(\alpha_{1,t})}x_2^{b_2(\alpha_{1,t})}\cdots x_{2p-1}^{b_{2p-1}(\alpha_{1,t})}+p\cdot\sum_{s\in\mathbb{Z}_p^\times\setminus\{1\}}x_1^{b_1(\alpha_{s,0})}x_2^{b_2(\alpha_{s,0})}\cdots x_{2p-1}^{b_{2p-1}(\alpha_{s,0})}\bigg]\\
&=&\frac{1}{p(p-1)}\bigg[x_1^{2p-1}+(p-1)x_1^{p-1}x_p+px_1\cdot\sum_{s=z^{i_s}\in\mathbb{Z}_p^\times\setminus\{1\}}x_{\frac{p-1}{\mathrm{gcd}(i_s,p-1)}}^{2\cdot\mathrm{gcd}(i_s,p-1)}\bigg]\\
&=&\frac{1}{p(p-1)}\bigg[x_1^{2p-1}+(p-1)x_1^{p-1}x_p+px_1\cdot\sum_{i_s\in\mathbb{Z}_{p-1}\setminus\{0\}}x_{\frac{p-1}{\mathrm{gcd}(i_s,p-1)}}^{2\cdot\mathrm{gcd}(i_s,p-1)}\bigg]\\
&=&\frac{1}{p(p-1)}\bigg[x_1^{2p-1}+(p-1)x_1^{p-1}x_p+px_1\cdot\sum_{\begin{smallmatrix}d\mid(p-1)\\d\neq 1\end{smallmatrix}}\Phi(d)x_{d}^{\frac{2(p-1)}{d}}\bigg]\\
&=&\frac{1}{p(p-1)}\bigg[x_1^{2p-1}+(p-1)x_1^{p-1}x_p+px_1\cdot\sum_{d\mid(p-1)}\Phi(d)x_{d}^{\frac{2(p-1)}{d}}-px_1^{2p-1}\bigg]\\
&=&\frac{1}{p}x_1^{p-1}(x_p-x_1^p)+\frac{1}{p-1}x_1\cdot\sum_{d\mid(p-1)}\Phi(d)x_{d}^{\frac{2(p-1)}{d}},
\end{eqnarray*}
where $\Phi(\cdot)$ denotes the Euler's totient function.
\end{proof}
According to  Lemmas \ref{cycle-index-2}, \ref{polya-1} and the arguments at the begining of the section, we  obtain the number of Cayley digraphs on $D_{2p}$ up to isomorphism immediately.
\begin{thm}\label{thm-enu-1}
Let $p$ be an odd prime. The number of Cayley digraphs on $D_{2p}$ up to isomorphism is equal to
\begin{equation}\label{eq-11}
\mathcal{N}=\frac{1}{p}2^{p}(1-2^{p-1})+\frac{2}{p-1}\cdot\sum_{d\mid(p-1)}\Phi(d)2^{\frac{2(p-1)}{d}},
\end{equation}
where $\Phi(\cdot)$ is the Euler's totient function.
\end{thm}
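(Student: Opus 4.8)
The plan is to obtain (\ref{eq-11}) as a direct corollary of the cycle-index computation in Lemma~\ref{cycle-index-2} together with the P\'olya Enumeration Theorem (Lemma~\ref{polya-1}). Concretely, I would argue in three short steps: first reduce the enumeration of Cayley digraphs up to isomorphism to counting orbits of the action $(\mathrm{Aut}(D_{2p}),C^A)$; then express that orbit count as a single value of the cycle index via Lemma~\ref{polya-1}; and finally plug in the explicit cycle index from Lemma~\ref{cycle-index-2} and simplify.

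For the first step I would invoke the correspondence already set up at the start of Section~\ref{s-3}. With $A=D_{2p}\setminus\{e\}$ and $C=\{0,1\}$, the assignment $S\mapsto f_S$ is an $\mathrm{Aut}(D_{2p})$-equivariant bijection between subsets $S\subseteq A$ and characteristic functions in $C^A$. Since $D_{2p}$ is a DCI-group (Lemma~\ref{DCI}), $\mathrm{Cay}(D_{2p},S)\cong\mathrm{Cay}(D_{2p},T)$ holds if and only if $\alpha(S)=T$ for some $\alpha\in\mathrm{Aut}(D_{2p})$, i.e.\ if and only if $f_S$ and $f_T$ lie in the same orbit of $(\mathrm{Aut}(D_{2p}),C^A)$. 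Hence the number $\mathcal{N}$ of Cayley digraphs on $D_{2p}$ up to isomorphism equals the number of orbits of this action. Applying Lemma~\ref{polya-1} with $|C|=m=2$ and $|A|=n=2p-1$ then gives $\mathcal{N}=P_{\mathrm{Aut}(D_{2p})}(2,2,\ldots,2)$, that is, the value of the cycle index $\mathcal{I}(\mathrm{Aut}(D_{2p}),A)$ obtained by setting every indeterminate equal to $2$.

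The last step is the evaluation. Substituting $x_1=x_2=\cdots=x_{2p-1}=2$ into the closed form
\[
\mathcal{I}(\mathrm{Aut}(D_{2p}),A)=\frac{1}{p}x_1^{p-1}(x_p-x_1^p)+\frac{1}{p-1}x_1\cdot\sum_{d\mid(p-1)}\Phi(d)x_{d}^{\frac{2(p-1)}{d}}
\]
of Lemma~\ref{cycle-index-2} turns the first summand into $\tfrac1p 2^{p-1}(2-2^{p})=\tfrac1p 2^{p}(1-2^{p-1})$ and the second into $\tfrac{2}{p-1}\sum_{d\mid(p-1)}\Phi(d)2^{\frac{2(p-1)}{d}}$, which is precisely (\ref{eq-11}). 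There is essentially no obstacle here: the only point worth a sentence is that setting all $x_i$ equal to $2$ in the displayed expression really does compute $P_G(2,\ldots,2)$ even though each monomial of the cycle index mentions only a few of the variables — this is immediate from the definition (\ref{eq-1}), since a variable not appearing in a monomial carries exponent $0$. Thus the theorem follows at once from Lemmas~\ref{cycle-index-2} and~\ref{polya-1}.
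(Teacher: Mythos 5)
Your proposal is correct and follows exactly the paper's route: reduce to counting orbits of $(\mathrm{Aut}(D_{2p}),C^A)$ via the DCI property (Lemma \ref{DCI}), apply Lemma \ref{polya-1}, and evaluate the cycle index of Lemma \ref{cycle-index-2} at $x_1=\cdots=x_{2p-1}=2$, and your algebraic simplification of both summands matches (\ref{eq-11}). No changes needed.
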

In \cite{Mishna}, Mishna enumerated the circulant digraphs of order $p$ up to isomorphism.
\begin{lem}(See \cite{Mishna}.)\label{circ-digraph}
Let $p$ be an odd prime. The number of circulant digraphs of order $p$ up to isomorphism is given by
\begin{equation}\label{eq-12}
\mathcal{N}_c=\frac{1}{p-1}\sum_{d\mid(p-1)}\Phi(d)2^{\frac{p-1}{d}},
\end{equation}
where $\Phi(\cdot)$ is the Euler's totient function.
\end{lem}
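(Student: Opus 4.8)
The plan is to treat circulant digraphs of order $p$ as Cayley digraphs on the cyclic group $\mathbb{Z}_p$ and then run exactly the machine of Section~\ref{s-3}. Set $A=\mathbb{Z}_p\setminus\{0\}$ and $C=\{0,1\}$, and identify each connection set $S\subseteq A$ with its characteristic function $f_S\in C^A$. Since $p$ is prime, $\mathbb{Z}_p$ is a DCI-group (the classical result of Turner, which belongs to the same circle of ideas as Lemma~\ref{DCI}); hence $\mathrm{Cay}(\mathbb{Z}_p,S)\cong\mathrm{Cay}(\mathbb{Z}_p,T)$ if and only if $\alpha(S)=T$ for some $\alpha\in\mathrm{Aut}(\mathbb{Z}_p)=\mathbb{Z}_p^\times$, that is, if and only if $f_S$ and $f_T$ lie in one orbit of the action $(\mathbb{Z}_p^\times,C^A)$ induced as in (\ref{eq-3}). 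Thus $\mathcal{N}_c$ equals the number of orbits of $(\mathbb{Z}_p^\times,C^A)$, which by Lemma~\ref{polya-1} is $P_{\mathbb{Z}_p^\times}(2,\dots,2)$, where $P_{\mathbb{Z}_p^\times}$ is the cycle index of $\mathbb{Z}_p^\times$ acting on $A$ by multiplication modulo $p$.

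It then remains to compute that cycle index, which is precisely the ``$A_1$-part'' of the computation already carried out in Lemma~\ref{cycle-index-1}. Writing $\mathbb{Z}_p^\times=\langle z\rangle$ and $s=z^{i}$ with $i\in\mathbb{Z}_{p-1}$, the order of $s$ in $\mathbb{Z}_p^\times$ is $o(s)=\frac{p-1}{\gcd(i,p-1)}$, and multiplication by $s$ permutes $A=\mathbb{Z}_p^\times$ by translation within the cosets of $\langle s\rangle$, so it decomposes into $\gcd(i,p-1)$ cycles, each of length $\frac{p-1}{\gcd(i,p-1)}$. Hence $P_{\mathbb{Z}_p^\times}(x_1,\dots,x_{p-1})=\frac{1}{p-1}\sum_{i\in\mathbb{Z}_{p-1}}x_{(p-1)/\gcd(i,p-1)}^{\gcd(i,p-1)}$. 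Collecting terms according to $d=\frac{p-1}{\gcd(i,p-1)}$, which runs over the divisors of $p-1$, and using that exactly $\Phi(d)$ elements $i\in\mathbb{Z}_{p-1}$ satisfy $\frac{p-1}{\gcd(i,p-1)}=d$ (the same totient identity used in the proof of Lemma~\ref{cycle-index-2}), we obtain $P_{\mathbb{Z}_p^\times}(x_1,\dots,x_{p-1})=\frac{1}{p-1}\sum_{d\mid(p-1)}\Phi(d)\,x_d^{(p-1)/d}$. Substituting $x_1=\dots=x_{p-1}=2$ as dictated by Lemma~\ref{polya-1} with $m=|C|=2$ then yields $\mathcal{N}_c=\frac{1}{p-1}\sum_{d\mid(p-1)}\Phi(d)2^{(p-1)/d}$, which is the claimed formula.

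No step here is genuinely hard: the argument is a lighter version of Lemmas~\ref{cycle-index-1}--\ref{cycle-index-2} together with Theorem~\ref{thm-enu-1}, with $D_{2p}$ replaced by $\mathbb{Z}_p$ and the two $\mathrm{Aut}$-invariant pieces $A_1,A_2$ replaced by the single orbit $A=\mathbb{Z}_p^\times$. The only external input is the DCI property of $\mathbb{Z}_p$ for prime $p$, and the only point demanding a little care is the reindexing of the sum over $i\in\mathbb{Z}_{p-1}$ as a sum over divisors $d\mid(p-1)$ weighted by $\Phi(d)$ — precisely the bookkeeping step that already appears when passing from Lemma~\ref{cycle-index-1} to Lemma~\ref{cycle-index-2}.
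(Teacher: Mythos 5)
Your proposal is correct, and it follows essentially the same route as the source the paper relies on: the paper gives no proof of this lemma but simply cites \cite{Mishna}, where the formula is obtained exactly as you do it, from the DCI property of $\mathbb{Z}_p$ ($p$ prime) together with P\'olya enumeration (Lemma~\ref{polya-1}) applied to the multiplicative action of $\mathbb{Z}_p^\times$ on $\mathbb{Z}_p\setminus\{0\}$, with the cycle index $\frac{1}{p-1}\sum_{d\mid(p-1)}\Phi(d)x_d^{(p-1)/d}$ arising from the same divisor-reindexing used in Lemma~\ref{cycle-index-2}. In other words, your argument is the $\mathbb{Z}_p$-analogue of the paper's own Section~\ref{s-3} computation, and it is complete.
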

It is well known that a Cayley digraph $\mathrm{Cay}(G,S)$ is connected  if and only if $\langle S \rangle=G$. Thus, for $S\subseteq A=D_{2p}\setminus\{e\}$, the Cayley digraph $\mathrm{Cay}(D_{2p},S)$ is disconnected if and only if $S\subseteq A_1=\langle\tau\rangle\setminus\{e\}=\{\tau^i\mid i\in \mathbb{Z}_p\setminus\{0\}\}$ or $S=\{\tau^j\sigma\}\subseteq A_2=\langle\tau\rangle\sigma$ for  $j\in \mathbb{Z}_p$ because $p$ is a prime. Also note that $\mathrm{Cay}(D_{2p},\{\tau^j\sigma\})\cong \mathrm{Cay}(D_{2p},\{\sigma\})$ for each $j$ since $\alpha_{0,j}(\sigma)=\tau^j\sigma$. Hence, from Theorem \ref{thm-enu-1} and  Lemma \ref{circ-digraph} we have the following result immediately.
\begin{thm}\label{thm-enu-2}
Let $p$ be an odd prime. The number of connected Cayley digraphs on $D_{2p}$ up to isomorphism is equal to
\begin{equation}\label{eq-12-1}
\mathcal{N}_1=\mathcal{N}-\mathcal{N}_{c}-1,
\end{equation}
where $\mathcal{N}$ and $\mathcal{N}_{c}$ are presented in (\ref{eq-11}) and (\ref{eq-12}), respictively.
\end{thm}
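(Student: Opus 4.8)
The plan is to count, among all Cayley digraphs on $D_{2p}$, exactly those that fail to be connected, and then subtract from $\mathcal{N}$. Recall that $\mathrm{Cay}(D_{2p},S)$ is connected iff $\langle S\rangle=D_{2p}$; since $D_{2p}$ has order $2p$ with $p$ prime, every proper subgroup of $D_{2p}$ is either the cyclic group $\langle\tau\rangle$ of order $p$ or one of the order-$2$ subgroups $\langle\tau^j\sigma\rangle$. Hence $\mathrm{Cay}(D_{2p},S)$ is disconnected precisely when $S$ is contained in a proper subgroup, i.e.\ either $S\subseteq A_1=\langle\tau\rangle\setminus\{e\}$ or $S=\{\tau^j\sigma\}$ for some $j\in\mathbb{Z}_p$ (note $S\subseteq A_2$ with $|S|\ge 2$ already generates all of $D_{2p}$, and $S=\emptyset$ lies in $A_1$). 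These two families are disjoint. So I would write the number of disconnected Cayley digraphs on $D_{2p}$ up to isomorphism as the sum of the number of isomorphism classes with $S\subseteq A_1$ and the number with $S=\{\tau^j\sigma\}$.

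First I would handle the case $S\subseteq A_1$. By Lemma~\ref{Aut}, every $\alpha_{s,t}\in\mathrm{Aut}(D_{2p})$ fixes $A_1=\langle\tau\rangle\setminus\{e\}$ setwise and acts on it by $\tau^i\mapsto\tau^{si}$, which is exactly the action of $\mathbb{Z}_p^\times$ on $\mathbb{Z}_p\setminus\{0\}$ underlying circulant digraphs on the cyclic group of order $p$; the parameter $t$ plays no role. Combined with Lemma~\ref{DCI} (so that isomorphism of Cayley digraphs reduces to $\mathrm{Aut}(D_{2p})$-equivalence of connection sets), the isomorphism classes of disconnected Cayley digraphs with $S\subseteq A_1$ are in bijection with the $\mathbb{Z}_p^\times$-orbits on subsets of $\mathbb{Z}_p\setminus\{0\}$, which is precisely the count $\mathcal{N}_c$ of circulant digraphs of order $p$ given in Lemma~\ref{circ-digraph}. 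Second, for $S=\{\tau^j\sigma\}$, the excerpt already notes $\alpha_{0,j}(\sigma)=\tau^j\sigma$, so all such singletons are $\mathrm{Aut}(D_{2p})$-equivalent and contribute exactly one isomorphism class. Adding these gives $\mathcal{N}_c+1$ disconnected classes, hence $\mathcal{N}_1=\mathcal{N}-\mathcal{N}_c-1$.

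The main thing to be careful about — the only place where a genuine argument rather than bookkeeping is needed — is the claim that $S\subseteq A_2$ with $|S|\ge 2$ forces connectedness, and more generally the exact description of proper subgroups: one must verify that any two distinct reflections $\tau^a\sigma,\tau^b\sigma$ generate $D_{2p}$, which follows because $(\tau^a\sigma)(\tau^b\sigma)=\tau^{a-b}$ has order $p$ (as $a\ne b$ in $\mathbb{Z}_p$, $p$ prime), so $\langle\tau^a\sigma,\tau^b\sigma\rangle$ contains $\langle\tau\rangle$ and a reflection, thus all of $D_{2p}$. I would also note in passing that the empty set is subsumed in the $S\subseteq A_1$ case, so it is not double-counted, and that the $S\subseteq A_1$ family and the $S=\{\tau^j\sigma\}$ family are manifestly disjoint (the former consists of sets of rotations, the latter of a single reflection), so no inclusion–exclusion correction is needed. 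With these points settled, the identity $\mathcal{N}_1=\mathcal{N}-\mathcal{N}_c-1$ is immediate from Theorem~\ref{thm-enu-1} and Lemma~\ref{circ-digraph}.
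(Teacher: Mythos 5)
Your proof is correct and follows essentially the same route as the paper: the paper likewise characterizes the disconnected Cayley digraphs as those with $S\subseteq A_1$ (counted by $\mathcal{N}_c$ via the $\mathbb{Z}_p^\times$-action, i.e.\ Lemma~\ref{circ-digraph}) or $S=\{\tau^j\sigma\}$ (a single class), and subtracts from Theorem~\ref{thm-enu-1}. You merely spell out the subgroup-structure details that the paper leaves implicit, so there is no substantive difference.
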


\begin{thm}\label{thm-enu-3}
Let $p$ be an odd prime and let  $\mathcal{M}_{k}$ denote the number of  Cayley digraphs  on $D_{2p}$  of out-degree $k$ up to isomorphism. Then  
\begin{equation}\label{eq-12-2}
\mathcal{M}_k=\left\{
\begin{aligned}
&1&~&\mbox{if } k=0\mbox{ or } 2p-1,\\
&\frac{1}{p}\bigg[\binom{p-1}{k}-\binom{2p-1}{k}\bigg]+\mathcal{M}_k^1&~&\mbox{if } 1\leq k\leq p-1,\\
&\frac{1}{p}\bigg[\binom{p-1}{k-p}-\binom{2p-1}{k}\bigg] +\mathcal{M}_k^1&~& \mbox{if } p\leq k\leq 2p-2,\\
\end{aligned}
\right.
\end{equation}
where 
\begin{equation*}\mathcal{M}_{k}^1=\frac{1}{p-1}\bigg[\sum_{d\mid\mathrm{gcd}(p-1,k)}\Phi(d)\binom{\frac{2(p-1)}{d}}{\frac{k}{d}}+\sum_{d\mid\mathrm{gcd}(p-1,k-1)}\Phi(d)\binom{\frac{2(p-1)}{d}}{\frac{k-1}{d}}\bigg]\end{equation*} and $\Phi(\cdot)$ is the Euler's totient function.
\end{thm}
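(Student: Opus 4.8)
The plan is to recast the enumeration as a count of $\mathrm{Aut}(D_{2p})$-orbits on subsets of $A=D_{2p}\setminus\{e\}$ and then apply Lemma~\ref{polya-2} with the cycle index obtained in Lemma~\ref{cycle-index-2}. First I would note that in $\mathrm{Cay}(D_{2p},S)$ the out-neighbours of a vertex $g$ are exactly $\{sg\mid s\in S\}$, so every vertex has out-degree $|S|$; hence a Cayley digraph on $D_{2p}$ has out-degree $k$ precisely when its connection set is a $k$-subset of $A$ (and $|A|=2p-1$). By Lemma~\ref{DCI}, $\mathrm{Cay}(D_{2p},S)\cong\mathrm{Cay}(D_{2p},T)$ if and only if $S$ and $T$ lie in the same $\mathrm{Aut}(D_{2p})$-orbit, so $\mathcal{M}_k$ equals the number of $\mathrm{Aut}(D_{2p})$-orbits of $k$-subsets of $A$. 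The extreme cases are immediate: the only $0$-subset is $\emptyset$ and the only $(2p-1)$-subset is $A$ itself, whence $\mathcal{M}_0=\mathcal{M}_{2p-1}=1$.

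For $1\le k\le 2p-2$, Lemma~\ref{polya-2} gives that $\mathcal{M}_k$ is the coefficient of $x^k$ in $P_G(1+x,1+x^2,\dots,1+x^{2p-1})$ with $G=\mathrm{Aut}(D_{2p})$. Substituting $x_j=1+x^j$ into the cycle index of Lemma~\ref{cycle-index-2} yields
\[
\frac{1}{p}(1+x)^{p-1}\bigl((1+x^p)-(1+x)^p\bigr)+\frac{1}{p-1}(1+x)\sum_{d\mid(p-1)}\Phi(d)(1+x^d)^{\frac{2(p-1)}{d}},
\]
and I would extract $[x^k]$ summand by summand. For the first summand, writing $(1+x)^{p-1}(1+x^p)=(1+x)^{p-1}+x^p(1+x)^{p-1}$ and $(1+x)^{p-1}(1+x)^p=(1+x)^{2p-1}$ gives $[x^k]=\frac{1}{p}\bigl(\binom{p-1}{k}+\binom{p-1}{k-p}-\binom{2p-1}{k}\bigr)$; with the convention $\binom{n}{j}=0$ for $j<0$ or $j>n$ this collapses to $\frac1p\bigl(\binom{p-1}{k}-\binom{2p-1}{k}\bigr)$ when $1\le k\le p-1$ (as $\binom{p-1}{k-p}=0$) and to $\frac1p\bigl(\binom{p-1}{k-p}-\binom{2p-1}{k}\bigr)$ when $p\le k\le 2p-2$ (as $\binom{p-1}{k}=0$), matching the stated formula.

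For the second summand I would expand the factor $(1+x)$ into two pieces and use that $[x^m](1+x^d)^{2(p-1)/d}=\binom{2(p-1)/d}{m/d}$ if $d\mid m$ and $0$ otherwise. Since each $d$ occurring in the sum divides $p-1$, the condition $d\mid m$ is equivalent to $d\mid\gcd(p-1,m)$; applying this with $m=k$ and $m=k-1$ turns the two pieces into $\frac{1}{p-1}\sum_{d\mid\gcd(p-1,k)}\Phi(d)\binom{2(p-1)/d}{k/d}$ and $\frac{1}{p-1}\sum_{d\mid\gcd(p-1,k-1)}\Phi(d)\binom{2(p-1)/d}{(k-1)/d}$, whose sum is exactly $\mathcal{M}_k^1$. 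Adding the contributions of the two summands gives the asserted expression for $\mathcal{M}_k$. There is no real obstacle here beyond careful bookkeeping of which binomial coefficients vanish in each range of $k$; the substance of the argument is entirely contained in Lemmas~\ref{polya-2} and~\ref{cycle-index-2}, and what remains is a routine coefficient extraction.
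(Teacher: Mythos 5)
Your proposal is correct and follows essentially the same route as the paper: reduce $\mathcal{M}_k$ to counting $\mathrm{Aut}(D_{2p})$-orbits of $k$-subsets of $A$ via Lemma~\ref{DCI}, then apply Lemma~\ref{polya-2} to the cycle index of Lemma~\ref{cycle-index-2} with $x_j=1+x^j$ and extract the coefficient of $x^k$, splitting by the ranges $1\le k\le p-1$ and $p\le k\le 2p-2$. The coefficient bookkeeping (vanishing binomials and the $d\mid\gcd$ reformulation) matches the paper's computation exactly.
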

\begin{proof}
Clearly, if $k=0$ (resp. $k=2p-1$), then $\mathrm{Cay}(D_{2p},\emptyset)$ (resp.  $\mathrm{Cay}(D_{2p},D_{2p}\setminus\{e\})$) is the unique Cayley digraph on $D_{2p}$ with out-degree $k$. It suffices to consider $1\leq k\leq 2p-2$. By Lemma \ref{cycle-index-1}, the cycle index of $\mathrm{Aut}(D_{2p})$ acting on $A=D_{2p}\setminus\{0\}$ is 
\begin{equation*}
\mathcal{I}(\mathrm{Aut}(D_{2p}),A)=\frac{1}{p}x_1^{p-1}(x_p-x_1^p)+\frac{1}{p-1}x_1\sum_{d\mid(p-1)}\Phi(d)x_{d}^{\frac{2(p-1)}{d}}.
\end{equation*}
Putting $x_i=1+x^i$ in the above equation, we obtain the polynomial
\begin{equation*}
\begin{aligned}
Q(x)&=\frac{1}{p}((1+x)^{p-1}(1+x^p)-(1+x)^{2p-1}))+\frac{1}{p-1}(1+x)\sum_{d\mid(p-1)}\Phi(d)(1+x^d)^{\frac{2(p-1)}{d}}\\
&=\frac{1}{p}\bigg[\sum_{i=0}^{p-1}\binom{p-1}{i}(x^i+x^{i+p})-\sum_{i=0}^{2p-1}\binom{2p-1}{i}x^i\bigg]\\
&~~~+\frac{1}{p-1}\sum_{d\mid(p-1)}\Phi(d)\sum_{i=0}^{\frac{2(p-1)}{d}}\binom{\frac{2(p-1)}{d}}{i}(x^{di}+x^{di+1}).
\end{aligned} 
\end{equation*}
Note that the two Cayley digraphs $\mathrm{Cay}(D_{2p},S)$ and $\mathrm{Cay}(D_{2p},T)$ are isomorphic if and only if $S$ and $T$ are $\mathrm{Aut}(D_{2p})$-equivalent by Lemma \ref{DCI}. Thus the number of Cayley digraphs  on $D_{2p}$  of out-degree $k$ up to isomorphism is equal to the number of $\mathrm{Aut}(D_{2p})$-equivalent $k$-subsets of $A$, which is also the coefficient of $x^k$ in the polynomial $Q(x)$ by Lemma \ref{polya-2}.  If $1\leq k\leq p-1$, the coefficient of $x^k$ in the polynomial $Q(x)$ is given by 
\begin{equation*}
\frac{1}{p}\bigg[\binom{p-1}{k}-\binom{2p-1}{k}\bigg]+\frac{1}{p-1}\bigg[\sum_{d\mid\mathrm{gcd}(p-1,k)}\Phi(d)\binom{\frac{2(p-1)}{d}}{\frac{k}{d}}+\sum_{d\mid\mathrm{gcd}(p-1,k-1)}\Phi(d)\binom{\frac{2(p-1)}{d}}{\frac{k-1}{d}}\bigg].
\end{equation*}
Similarly, if $p\leq k\leq 2p-2$, the coefficient of $x^k$ in the polynomial $Q(x)$ is equal to
\begin{equation*}
\frac{1}{p}\bigg[\binom{p-1}{k-p}-\binom{2p-1}{k}\bigg]+\frac{1}{p-1}\bigg[\sum_{d\mid\mathrm{gcd}(p-1,k)}\Phi(d)\binom{\frac{2(p-1)}{d}}{\frac{k}{d}}+\sum_{d\mid\mathrm{gcd}(p-1,k-1)}\Phi(d)\binom{\frac{2(p-1)}{d}}{\frac{k-1}{d}}\bigg].
\end{equation*}
We obtain the  result as required.
\end{proof}
\begin{lem}(See \cite{Mishna}.)\label{circ-di-degree}
Let $p$ be an odd prime. The number of circulant digraphs of order $p$ with out-degree $k$ up to isomorphism is given by
\begin{equation}\label{eq-12-3}
\mathcal{M}_{c,k}=\frac{1}{p-1}\sum_{d\mid\mathrm{gcd}(p-1,k)}\Phi(d)\binom{\frac{p-1}{d}}{\frac{k}{d}},
\end{equation}
where $\Phi(\cdot)$ is the Euler's totient function.
\end{lem}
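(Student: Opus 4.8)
The plan is to carry out, in the simpler cyclic setting, exactly the Pólya-counting argument developed above for $D_{2p}$ (Lemmas \ref{cycle-index-1}--\ref{cycle-index-2} together with Theorem \ref{thm-enu-3}). Since $p$ is prime, $\mathbb{Z}_p$ is a DCI-group (this is classical, going back to Turner; it is also the ``cyclic part'' of the situation handled by Lemma \ref{DCI}), so two circulant digraphs $\mathrm{Cay}(\mathbb{Z}_p,S)$ and $\mathrm{Cay}(\mathbb{Z}_p,T)$ of order $p$ are isomorphic if and only if $\alpha(S)=T$ for some $\alpha\in\mathrm{Aut}(\mathbb{Z}_p)=\mathbb{Z}_p^\times$. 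Hence $\mathcal{M}_{c,k}$ equals the number of $\mathbb{Z}_p^\times$-equivalence classes of $k$-subsets of $B=\mathbb{Z}_p\setminus\{0\}$, where $\mathbb{Z}_p^\times$ acts on $B$ by multiplication modulo $p$. By Lemma \ref{polya-2}, this number is the coefficient of $x^k$ in $P_{\mathbb{Z}_p^\times}(1+x,1+x^2,\dots,1+x^{p-1})$, so it remains only to compute the cycle index $\mathcal{I}(\mathbb{Z}_p^\times,B)$ and then extract a coefficient.

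First I would compute the cycle index. Fix a generator $z$ of the cyclic group $\mathbb{Z}_p^\times$ and write each $s\in\mathbb{Z}_p^\times$ as $s=z^{i_s}$ with $i_s\in\mathbb{Z}_{p-1}$. The order of $s$ is $o(s)=\frac{p-1}{\mathrm{gcd}(i_s,p-1)}$, and since for $a\in B$ one has $s^ja\equiv a\pmod p$ precisely when $s^j=1$, the permutation of $B$ induced by $s$ is a product of $\frac{p-1}{o(s)}=\mathrm{gcd}(i_s,p-1)$ cycles, each of length $o(s)=\frac{p-1}{\mathrm{gcd}(i_s,p-1)}$. Summing $x_{\frac{p-1}{\mathrm{gcd}(i_s,p-1)}}^{\mathrm{gcd}(i_s,p-1)}$ over all $i_s\in\mathbb{Z}_{p-1}$ and regrouping the terms by the divisor $d=\frac{p-1}{\mathrm{gcd}(i_s,p-1)}$ of $p-1$ gives
\begin{equation*}
\mathcal{I}(\mathbb{Z}_p^\times,B)=\frac{1}{p-1}\sum_{i_s\in\mathbb{Z}_{p-1}}x_{\frac{p-1}{\mathrm{gcd}(i_s,p-1)}}^{\mathrm{gcd}(i_s,p-1)}=\frac{1}{p-1}\sum_{d\mid(p-1)}\Phi(d)\,x_d^{\frac{p-1}{d}}.
\end{equation*}
Then substituting $x_j=1+x^j$ and expanding yields
\begin{equation*}
P_{\mathbb{Z}_p^\times}(1+x,\dots,1+x^{p-1})=\frac{1}{p-1}\sum_{d\mid(p-1)}\Phi(d)(1+x^d)^{\frac{p-1}{d}}=\frac{1}{p-1}\sum_{d\mid(p-1)}\Phi(d)\sum_{i=0}^{\frac{p-1}{d}}\binom{\frac{p-1}{d}}{i}x^{di},
\end{equation*}
and the coefficient of $x^k$ is obtained from the divisors $d$ of $p-1$ for which $d\mid k$ (taking $i=k/d$), i.e. exactly the divisors of $\mathrm{gcd}(p-1,k)$, which gives formula (\ref{eq-12-3}).

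The binomial expansion and coefficient extraction are entirely routine; the only step deserving care is the regrouping of the cycle-index sum, which rests on the identity $\#\{i_s\in\mathbb{Z}_{p-1}:\mathrm{gcd}(i_s,p-1)=\tfrac{p-1}{d}\}=\Phi(d)$ for each divisor $d\mid(p-1)$ — the same counting fact already used in the proof of Lemma \ref{cycle-index-2}. Beyond that, the only external input is that $\mathbb{Z}_p$ is a DCI-group for prime $p$, which legitimizes replacing ``isomorphism'' by ``$\mathrm{Aut}(\mathbb{Z}_p)$-equivalence'' so that Lemma \ref{polya-2} applies; I do not expect any genuine obstacle here, the argument being a direct and simpler parallel of the $D_{2p}$ computation performed above.
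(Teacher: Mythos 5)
Your proof is correct: the DCI property of $\mathbb{Z}_p$ for prime $p$, the cycle index $\frac{1}{p-1}\sum_{d\mid(p-1)}\Phi(d)\,x_d^{\frac{p-1}{d}}$ of $\mathbb{Z}_p^\times$ acting on $\mathbb{Z}_p\setminus\{0\}$ (each nonidentity multiplier acting with all cycles of full length $o(s)$, since the points are invertible mod $p$), and the coefficient extraction via Lemma \ref{polya-2} all check out and yield exactly (\ref{eq-12-3}). Note that the paper gives no proof of this lemma, quoting it from \cite{Mishna}; your argument is precisely the cyclic analogue of the paper's own computation for $D_{2p}$ (Lemmas \ref{cycle-index-1}--\ref{cycle-index-2} and Theorem \ref{thm-enu-3}), which is also how the cited source obtains it.
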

Let $\mathrm{Cay}(D_{2p},S)$ be a Cayley graph on $D_{2p}$ with $|S|=k$. If $k=0$ or $1$, then $\mathrm{Cay}(D_{2p},S)$ is obviously disconnected. Observe that for $2\leq k\leq p-1$, $\mathrm{Cay}(D_{2p},S)$ is disconnected if and only if $S\subseteq A_1=\langle\tau\rangle\setminus\{e\}=\{\tau^i\mid i\in \mathbb{Z}_p\setminus\{0\}\}$, and for $p\leq k\leq 2p-1$, $\mathrm{Cay}(D_{2p},S)$ must be connected. By Theorem \ref{thm-enu-3} and Lemma \ref{circ-di-degree}, we obtain the following result.
\begin{thm}\label{thm-enu-3-1}
Let $p$ be an odd prime. The number of connected Cayley digraphs on $D_{2p}$  of out-degree $k$ up to isomorphism is
\begin{equation}\label{eq-12-4}
\mathcal{M}_k'=
\left\{
\begin{array}{ll}
0 &\mbox{if } k=0,1,\\ 
\mathcal{M}_k-\mathcal{M}_{c,k} &\mbox{if } 2\leq k\leq p-1,\\
\mathcal{M}_k & \mbox{if } p\leq k\leq 2p-1,
\end{array}
\right.
\end{equation}
where $\mathcal{M}_k$ and $\mathcal{M}_{c,k}$ are presented in (\ref{eq-12-2}) and (\ref{eq-12-3}), respictively.
\end{thm}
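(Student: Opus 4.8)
The plan is to treat the three ranges of $k$ appearing in (\ref{eq-12-4}) separately, in each case determining precisely which connection sets $S\subseteq A=D_{2p}\setminus\{e\}$ of size $k$ give a \emph{disconnected} Cayley digraph, and then subtracting the number of isomorphism classes of such sets from the count $\mathcal{M}_k$ of Theorem \ref{thm-enu-3}.

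The key preliminary observation I would isolate is: for $S\subseteq A$ with $|S|\ge 2$, the digraph $\mathrm{Cay}(D_{2p},S)$ is disconnected if and only if $S\subseteq A_1=\langle\tau\rangle\setminus\{e\}$. Since $\mathrm{Cay}(D_{2p},S)$ is connected exactly when $\langle S\rangle=D_{2p}$, one direction is clear: $S\subseteq A_1$ forces $\langle S\rangle\le\langle\tau\rangle\neq D_{2p}$. For the other, if $S$ contains some reflection $\tau^{j}\sigma$ then, using $|S|\ge 2$, either $S$ also contains a nontrivial rotation $\tau^{i}$ (and $\langle\tau^i\rangle=\langle\tau\rangle$ because $p$ is prime, so $\langle S\rangle=D_{2p}$), or $S$ contains a second reflection $\tau^{j'}\sigma$ with $j'\neq j$, whence $\tau^{j-j'}=(\tau^{j}\sigma)(\tau^{j'}\sigma)\in\langle S\rangle$ is a nontrivial rotation and again $\langle S\rangle=D_{2p}$. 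With this in hand, the cases $k=0,1$ are immediate (then $\langle S\rangle\neq D_{2p}$ whatever $S$ is, so $\mathcal{M}_k'=0$), and for $p\le k\le 2p-1$ we have $|S|=k>p-1=|A_1|$, hence $S\not\subseteq A_1$ and (as $k\ge 2$) $\mathrm{Cay}(D_{2p},S)$ is connected, giving $\mathcal{M}_k'=\mathcal{M}_k$.

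It remains to handle $2\le k\le p-1$, where I must count the isomorphism classes of $\mathrm{Cay}(D_{2p},S)$ with $S\subseteq A_1$ and $|S|=k$. Identifying $A_1$ with $\mathbb{Z}_p\setminus\{0\}$ via $\tau^i\leftrightarrow i$, Lemma \ref{Aut} shows that each $\alpha_{s,t}$ restricts on $A_1$ to multiplication by $s$, so the permutation group induced by $\mathrm{Aut}(D_{2p})$ on $A_1$ is exactly $\mathbb{Z}_p^{\times}=\mathrm{Aut}(\mathbb{Z}_p)$. Because every $\alpha_{s,t}$ maps $A_1$ to $A_1$, Lemma \ref{DCI} gives that $\mathrm{Cay}(D_{2p},S)\cong\mathrm{Cay}(D_{2p},T)$ for $S,T\subseteq A_1$ if and only if $S$ and $T$ are $\mathbb{Z}_p^{\times}$-equivalent, which — since $\mathbb{Z}_p$ is a DCI-group — holds if and only if $\mathrm{Cay}(\mathbb{Z}_p,S)\cong\mathrm{Cay}(\mathbb{Z}_p,T)$. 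Conversely every circulant digraph of order $p$ with out-degree $k\le p-1$ arises from some such $S\subseteq A_1$, and connectedness is an isomorphism invariant, so these disconnected classes are disjoint from the connected ones counted above. Hence the number of disconnected classes of out-degree $k$ equals $\mathcal{M}_{c,k}$ of Lemma \ref{circ-di-degree}, and $\mathcal{M}_k'=\mathcal{M}_k-\mathcal{M}_{c,k}$, as claimed.

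The remaining work is bookkeeping; the one place that needs genuine care — and which I regard as the crux — is the identification in the middle range: one must verify, using both that $A_1$ is $\mathrm{Aut}(D_{2p})$-invariant and that $D_{2p}$ and $\mathbb{Z}_p$ are DCI-groups, that counting disconnected Cayley digraphs on $D_{2p}$ of out-degree $k$ reduces \emph{exactly} to counting circulant digraphs of order $p$ of out-degree $k$, with neither double counting nor omission.
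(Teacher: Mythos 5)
Your proposal is correct and follows essentially the same route as the paper: the paper likewise characterizes the disconnected digraphs with $|S|\ge 2$ as exactly those with $S\subseteq A_1$, notes that $k=0,1$ give disconnected and $k\ge p$ give connected digraphs, and then subtracts Mishna's count $\mathcal{M}_{c,k}$ of circulant digraphs of out-degree $k$ from $\mathcal{M}_k$. Your write-up merely supplies the verification details (the generation argument for $\langle S\rangle=D_{2p}$ and the identification of the induced action on $A_1$ with $\mathbb{Z}_p^{\times}$) that the paper leaves implicit.
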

\begin{exam}\label{exam-1}
\emph{Take $p=3$ and consider the dihedral group $D_6=\langle\tau,\sigma\mid\tau^3=\sigma^2=e\rangle=\{\tau^i,\tau^j\sigma\mid 0\leq i,j\leq 2\}$.
Let $A=D_6\setminus\{e\}$. Then it is easy to see that all the representative elements of $\mathrm{Aut}(D_6)$-equivalent classes of subsets of $A$ are as follows: $\emptyset$, $\{\tau\}$, $\{\sigma\}$, $\{\tau,\tau^2\}$,  $\{\tau,\sigma\}$, $\{\sigma,\tau\sigma\}$,  $\{\tau,\sigma,\tau\sigma\}$, $\{\tau,\tau^2,\sigma\}$, $\{\sigma,\tau\sigma,\tau^2\sigma\}$, $\{\tau,\sigma,\tau\sigma,\tau^2\sigma\}$, $\{\tau,\tau^2,\sigma,\tau\sigma\}$, $\{\tau,\tau^2,\sigma,\tau\sigma,\tau^2\sigma\}$. Thus there are exactly  twelve Cayley digraphs on $D_6$ up to isomorphism in which eight are connected. By Theorems \ref{thm-enu-1} and \ref{thm-enu-2}, 
\begin{equation*}
\begin{aligned}
\mathcal{N}&=\frac{1}{3}\cdot2^{3}(1-2^{2})+\sum_{d\mid2}\Phi(d)2^{\frac{4}{d}}=12,\\
\mathcal{N}_1&=\mathcal{N}-\mathcal{N}_c-1=12-\frac{1}{2}\sum_{d\mid 2}\Phi(d)2^{\frac{2}{d}}-1=8,
\end{aligned}
\end{equation*}
as required. Also, we see that there are exactly  three Cayley digraphs on $D_6$ with out-degree $k=2$ up to isomorphism in which two are connected. By Theorems \ref{thm-enu-3} and \ref{thm-enu-3-1}, we obtain 
\begin{equation*}
\begin{aligned}
\mathcal{M}_2&=\frac{1}{3}\bigg[\binom{2}{2}-\binom{5}{2}\bigg]+\frac{1}{2}\bigg[\sum_{d\mid\mathrm{gcd}(2,2)}\Phi(d)\binom{\frac{4}{d}}{\frac{2}{d}}+\sum_{d\mid\mathrm{gcd}(2,1)}\Phi(d)\binom{\frac{4}{d}}{\frac{1}{d}}\bigg]=3,\\
\mathcal{M}_2'&=\mathcal{M}_2-\mathcal{M}_{c,2}=3-\frac{1}{2}\sum_{d\mid\mathrm{gcd}(2,2)}\Phi(d)\binom{\frac{2}{d}}{\frac{2}{d}}=2.
\end{aligned}
\end{equation*}
}
\end{exam}
\begin{exam}\label{exam-2}
\emph{
Combining Theorems \ref{thm-enu-3} and \ref{thm-enu-3-1}, we have
\begin{equation}\label{eq-12-5}
\sum_{k=2}^{2p-1}\mathcal{M}_k'=\mathcal{N}_1.
\end{equation} 
In Tab. \ref{tab-1}, we list the number of connected Cayley digraphs on $D_{2p}$ with out-degree $k$  up to isomorphism for each $2\leq k\leq 2p-1$, where $3\leq p\leq 19$.  One can easily verify that (\ref{eq-12-5}) holds for each $p$.
\begin{table}[h]\footnotesize{
\caption{\label{tab-1}
\small{The number of connected Cayley digraphs on $D_{2p}$ ($3\leq p\leq 19$).}}
\begin{tabular*}{15cm}{@{\extracolsep{\fill}}c|c|c}
\toprule
$p$&$(\mathcal{M}_2',\ldots,\mathcal{M}_{2p-1}')$&$\mathcal{N}_1$ \\
  \midrule
3&$(2,3,2,1)$&8\\
\hline
5&$(2,5,8,9,6,4,2,1)$&37\\
\hline
7&$(2,8,20,33,46,47,34,23,12,5,2,1)$& 233\\
\hline
11&$\begin{array}{c}(2,12,56,188,504,1068,1870,2693,3234,3235,2694,1875,\\1080,526,214,78,24,7,2,1)\end{array}$&19363\\
\hline
13&$\begin{array}{c}(2,15,84,342,1150,3100,6972,13147,21028,28639,33428,33429,\\28640,21034,13166,7015,3166,1230,408,127,34,8,2,1)\end{array}$&216167\\
\hline
17&$\begin{array}{c}(2,19,150,865,4074,15699,51110,141867,340538,	711789,	1304974,	2107735,\\3011042,3813797,4290532,4290533,3813798,3011050,2107770,1305090,\\712062,341042,142582,51920,16414,4578,1138,	266,	54,10,2,1)\end{array}$&31592503\\
\hline
19&$\begin{array}{c}(2,22,192,1258,6790,30064,112924,363804,1018876,2500360,5417644,\\
10417634,17858736,27382554,37650982,46509357,51677170,51677171,\\
46509358,37650991,27382600,17858908,10418110,5418682,2502128,\\
1021314,366508,115362,31832,7828,1734,364,68,11,2,1)\end{array}$&401911341\\
\bottomrule
\end{tabular*}}
\end{table}
}
\end{exam}

\section{Enumerating Cayley graphs on $D_{2p}$}\label{s-4}
Let $D_{2p}=\langle\tau,\sigma\mid \tau^p=\sigma^2=e, \sigma\tau\sigma=\tau^{-1}\rangle=\{\tau^i,\tau^j\sigma\mid i,j \in \mathbb{Z}_p\}$ be the dihedral group of order $2p$  ($p$ is an odd prime).  Take $A=A_1\cup A_2$, where
\begin{equation}\label{eq-13}
A_1=\Big\{\bar{\tau}^i=\{\tau^i,\tau^{-i}\}\mid 1\leq i\leq \frac{p-1}{2}\Big\}\mbox{ and } A_2=\{\tau^j\sigma\mid j\in\mathbb{Z}_p\}.
\end{equation}
Then $\mathrm{Aut}(D_{2p})$ acts on $A$ naturally by setting $\alpha_{s,t}(\bar{\tau}^i)=\bar{\tau}^{si}$ and $\alpha_{s,t}(\tau^j\sigma)=\tau^{sj+t}\sigma$ for each $\alpha_{s,t}\in \mathrm{Aut}(D_{2p})$.  Furthermore, if $\alpha_{s,t}\in \mathrm{Aut}(D_{2p})$ fixes every element of $A$, then $\alpha_{s,t}(\tau^j\sigma)=\tau^j\sigma$ for each $j\in\mathbb{Z}_p$. Take $j=0$, we have  $\alpha_{s,t}(\tau^0\sigma)=\tau^t\sigma=\tau^0\sigma$, which implies that $t=0$. Take some $j=j_0\in\mathbb{Z}_p\setminus\{0\}$, then $\alpha_{s,t}(\tau^{j_0}\sigma)=\alpha_{s,0}(\tau^{j_0}\sigma)=\tau^{sj_0}\sigma=\tau^{j_0}\sigma$, which gives that $sj_0=j_0$ in $\mathbb{Z}_p$, thus we must have $s=1$ due to $j_0\in\mathbb{Z}_p\setminus\{0\}=\mathbb{Z}_p^\times$ is invertible. Therefore, we obtain that $\alpha_{s,t}=\alpha_{1,0}$, which is the identity element of $\mathrm{Aut}(D_{2p})$. This implies that $\mathrm{Aut}(D_{2p})$ acts on $A$ faithfully, and so can be viewed as a permutation group on $A$. Also  observe that $\alpha_{s,t}(A_1)=A_1$ and $\alpha_{s,t}(A_2)=A_2$ for each $\alpha_{s,t}\in \mathrm{Aut}(D_{2p})$. Let $C=\{0,1\}$. As explained at the begining of Section \ref{s-3}, we may conclude that the number of Cayley graphs on $D_{2p}$ up to isomorphism is equal to the number of orbits of the group action $(\mathrm{Aut}(D_{2p}),C^A)$. Thus the primary task is to calculate the cycle index of $\mathrm{Aut}(D_{2p})$ acting on $A$.

\begin{lem}\label{cycle-index-3}
Let $A=A_1\cup A_2$ be defined as in (\ref{eq-13}), and let $\mathrm{Aut}(D_{2p})$ act on $A$ as above. Let $z$ be a generating element of the cyclic group $\mathbb{Z}_p^\times$. Under the action of $\mathrm{Aut}(D_{2p})$ on $A$, the cycle type of $\alpha_{s,t}\in \mathrm{Aut}(D_{2p})$ is given by  $type(\alpha_{s,t})=(b_1(\alpha_{s,t}),b_2(\alpha_{s,t}),\ldots,b_{\frac{3p-1}{2}}(\alpha_{s,t}))$, where 
\begin{equation}\label{eq-14}
b_k(\alpha_{1,t})=\left\{
\begin{aligned}
&\frac{3p-1}{2}&~&\mbox{if }k=1\mbox{ and } t=0,\\
&\frac{p-1}{2}&~&\mbox{if }k=1\mbox{ and } t\in\mathbb{Z}_p\setminus\{0\},\\
&1&~&\mbox{if } k=p \mbox{ and }t\in\mathbb{Z}_p\setminus\{0\},\\
&0&~&\mbox{otherwise},
\end{aligned}
\right.
\end{equation}
and for each $1\neq s=z^{i_s}\in\mathbb{Z}_p^\times$ (i.e., $i_s\neq 0$) and $t\in\mathbb{Z}_p$,
\begin{equation}\label{eq-15}
b_k(\alpha_{s,t})=b_k(\alpha_{s,0})=\left\{
\begin{aligned}
&1&~&\mbox{if }k=1,\\
&\frac{\mathrm{gcd}(2i_s,p-1)}{2}&~&\mbox{if } k=\frac{p-1}{\mathrm{gcd}(2i_s,p-1)},\\
&\mathrm{gcd}(i_s,p-1)&~&\mbox{if } k=\frac{p-1}{\mathrm{gcd}(i_s,p-1)},\\
&0&~&\mbox{otherwise}.
\end{aligned}
\right.
\end{equation}
(Note that if $s=-1$, i.e., $i_s=\frac{p-1}{2}$,  then $\frac{p-1}{\mathrm{gcd}(2i_s,p-1)}=1$, and so $b_1(\alpha_{-1,t})=b_1(\alpha_{-1,0})=1+\frac{\mathrm{gcd}(2i_s,p-1)}{2}=\frac{p+1}{2}$ for each $t\in \mathbb{Z}_p$.)
\end{lem}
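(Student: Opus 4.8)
The plan is to follow the proof of Lemma \ref{cycle-index-1} as closely as possible. Since $A_1$ and $A_2$ are each invariant under every $\alpha_{s,t}$ (as observed just before the statement), one has $b_k(\alpha_{s,t})=b_k(\alpha_{s,t}|_{A_1})+b_k(\alpha_{s,t}|_{A_2})$ for every $k$, and because $|A_1|=\frac{p-1}{2}$ and $|A_2|=p$, every $b_k$ with $k>\frac{3p-1}{2}$ vanishes. For $s=1$ I would argue as follows: $\alpha_{1,t}$ fixes each $\bar{\tau}^i\in A_1$, contributing $\frac{p-1}{2}$ one-cycles there, while on $A_2$ it is the map $\tau^j\sigma\mapsto\tau^{j+t}\sigma$ already analysed in Lemma \ref{cycle-index-1}, which splits $A_2$ into $p$ one-cycles when $t=0$ and into a single $p$-cycle when $t\in\mathbb{Z}_p\setminus\{0\}$ (because $o(t)=p$ in $\mathbb{Z}_p$). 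Summing the two contributions yields (\ref{eq-14}).

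For $s\neq1$, write $s=z^{i_s}$ with $i_s\neq0$. First I would show, exactly as in Lemma \ref{cycle-index-1}, that $\alpha_{s,t}$ and $\alpha_{s,0}$ have the same cycle type: they agree on $A_1$, and on $A_2$ the bijection $\beta\colon\tau^j\sigma\mapsto\tau^{\,j+(1-s)^{-1}t}\sigma$ (well defined since $1-s\in\mathbb{Z}_p^\times$) conjugates $\alpha_{s,0}$ to $\alpha_{s,t}$ by the same computation as there. So it remains to determine the cycle type of $\alpha_{s,0}$ on $A=A_1\cup A_2$. The restriction to $A_2$ is verbatim the corresponding part of Lemma \ref{cycle-index-1}: $\tau^0\sigma$ is fixed, and the remaining $p-1$ elements form the cosets of $\langle s\rangle$ acting on $\mathbb{Z}_p^\times$ by multiplication, giving $\mathrm{gcd}(i_s,p-1)$ cycles each of length $o(s)=\frac{p-1}{\mathrm{gcd}(i_s,p-1)}$.

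The genuinely new ingredient is the action of $\alpha_{s,0}$ on $A_1$. Identifying $\bar{\tau}^i$ with the image of $i$ in $\mathbb{Z}_p^\times/\{\pm1\}$, the permutation $\alpha_{s,0}$ becomes multiplication by the class $\bar{s}$; since $\mathbb{Z}_p^\times/\{\pm1\}$ is a group, all its orbits are cosets of $\langle\bar{s}\rangle$ and therefore share the common length $o(\bar{s})$. Using $\mathbb{Z}_p^\times=\langle z\rangle$ and $\{\pm1\}=\langle z^{(p-1)/2}\rangle$, the quotient is cyclic of order $\frac{p-1}{2}$ generated by $\bar{z}$, so $o(\bar{s})=o(\bar{z}^{\,i_s})=\frac{(p-1)/2}{\mathrm{gcd}(i_s,(p-1)/2)}$, and the elementary identity $\mathrm{gcd}(2i_s,p-1)=2\,\mathrm{gcd}(i_s,\tfrac{p-1}{2})$ rewrites this as $\frac{p-1}{\mathrm{gcd}(2i_s,p-1)}$; hence $A_1$ contributes $\frac{(p-1)/2}{(p-1)/\mathrm{gcd}(2i_s,p-1)}=\frac{\mathrm{gcd}(2i_s,p-1)}{2}$ cycles of that length. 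Combining with the $A_2$ count produces (\ref{eq-15}), and the parenthetical case is simply that $\frac{p-1}{\mathrm{gcd}(2i_s,p-1)}=1$ forces $2i_s=p-1$, i.e. $s=-1$, in which case the $\frac{p-1}{2}$ fixed points from $A_1$ merge with the single fixed point from $A_2$ to give $b_1(\alpha_{-1,t})=\frac{p+1}{2}$. I expect this $A_1$-analysis — making sure all its orbits really do have one common length (which is why passing to the group $\mathbb{Z}_p^\times/\{\pm1\}$ rather than arguing orbit-by-orbit is convenient) and correctly matching the two $\mathrm{gcd}$ expressions — to be the only delicate point, since the $t\mapsto0$ reduction and the $A_2$ part are essentially quoted from Lemma \ref{cycle-index-1}. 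A useful final check is that the cycle lengths exhaust $A$: with $d=\mathrm{gcd}(2i_s,p-1)$ one has $\frac{d}{2}\cdot\frac{p-1}{d}+\mathrm{gcd}(i_s,p-1)\cdot\frac{p-1}{\mathrm{gcd}(i_s,p-1)}+1=\frac{p-1}{2}+(p-1)+1=\frac{3p-1}{2}=|A|$, and (as with the $s=-1$ remark) when $\frac{p-1}{\mathrm{gcd}(2i_s,p-1)}$ happens to coincide with $\frac{p-1}{\mathrm{gcd}(i_s,p-1)}$ the two listed counts are to be added.
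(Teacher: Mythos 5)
Your proposal is correct and follows essentially the same route as the paper: the same $t\mapsto 0$ reduction via the conjugating bijection $\beta$, the same treatment of $A_2$, and the same count on $A_1$, where the paper computes the minimal $\bar{o}(s)$ with $z^{i_s\bar{o}(s)}=\pm 1$ directly while you phrase the identical computation as the order of $\bar{s}$ in the cyclic quotient $\mathbb{Z}_p^\times/\{\pm 1\}$ of order $\frac{p-1}{2}$. Your closing remarks (the additivity of the two counts when the lengths coincide, and the cardinality check $\frac{p-1}{2}+(p-1)+1=\frac{3p-1}{2}$) are consistent with, and slightly more explicit than, the paper's parenthetical note on $s=-1$.
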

\begin{proof}
Since $\alpha_{s,t}(A_1)=A_1$ and $\alpha_{s,t}(A_2)=A_2$ for each $\alpha_{s,t}\in \mathrm{Aut}(D_{2p})$, we must have $b_{p+1}(\alpha_{s,t})=\dots=b_{\frac{3p-1}{2}}(\alpha_{s,t})=0$. For $\alpha_{s,t}\in \mathrm{Aut}(D_{2p})$, we consider the following two situations.

\textit{Case 1.} $s=1$;

Note that $\alpha_{1,t}(\bar{\tau}^i)=\bar{\tau}^i$ for $1\leq i\leq \frac{p-1}{2}$, so the permutation $\alpha_{1,t}$ splits $A_1$ into $\frac{p-1}{2}$ cycles each of length $1$.  Also note that $\alpha_{1,t}(\tau^j\sigma)=\tau^{j+t}\sigma$ for $j\in \mathbb{Z}_p$. As in the proof of Lemma \ref{cycle-index-1}, we may conclude that the permutation $\alpha_{1,t}$ splits $A_2$ into $p$ cycles each of length $1$ when $t=0$ and exactly one cycle of length $p$ when $t\in \mathbb{Z}_p\setminus\{0\}$. Therefore, we have obtained the cycle type of $\alpha_{1,t}$, as shown in (\ref{eq-14}).

\textit{Case 2.} $s\neq1$, say $s=z^{i_s}$ ($i_s\neq 0$);

Note that $\alpha_{s,t}(\bar{\tau}^i)=\bar{\tau}^{si}$ for $i\in \mathbb{Z}_p\setminus\{0\}$ and $\alpha_{s,t}(\tau^j\sigma)=\tau^{sj+t}\sigma$ for $j\in \mathbb{Z}_p$. Firstly, we shall prove that  $\alpha_{s,t}$ has the same cycle type as $\alpha_{s,0}$ for each $t\in\mathbb{Z}_p$. As $\alpha_{s,t}(\bar{\tau}^i)=\bar{\tau}^{si}=\alpha_{s,0}(\bar{\tau}^i)$, $\alpha_{s,t}$ and $\alpha_{s,0}$ have the same cycle type in $A_1$. Now consider $\alpha_{s,t}$ and $\alpha_{s,0}$ acting on $A_2$. By the same method as in the proof of Lemma \ref{cycle-index-2}, one can deduce that  $\alpha_{s,t}$ and $\alpha_{s,0}$ also have the same cycle type in $A_2$. Hence, we just need to consider the cycle type of $\alpha_{s,0}$ in $A=A_1\cup A_2$. For any fixed $\bar{\tau}^i\in A_1$, assume that $\bar{o}(s)$ is the minimal positive integer such that $\alpha_{s,0}^{\bar{o}(s)}(\bar{\tau}^i)=\bar{\tau}^{s^{\bar{o}(s)}\cdot i}=\bar{\tau}^{z^{i_s\bar{o}(s)}\cdot i}=\bar{\tau}^i$. Then we have $\tau^{z^{i_s\bar{o}(s)}\cdot i}=\tau^i$ and $\tau^{-z^{i_s\bar{o}(s)}\cdot i}=\tau^{-i}$, or  $\tau^{z^{i_s\bar{o}(s)}\cdot i}=\tau^{-i}$ and $\tau^{-z^{i_s\bar{o}(s)}\cdot i}=\tau^{i}$. For the former, we obtain that   $z^{i_s\bar{o}(s)}=1$ and so $i_s\bar{o}(s)\equiv 0~(\mathrm{mod}~p-1)$, which gives that $i_s\bar{o}(s)\equiv 0~(\mathrm{mod}~\frac{p-1}{2})$; for the later, we get $z^{i_s\bar{o}(s)}=-1=z^{\frac{p-1}{2}}$ and so $i_s\bar{o}(s)\equiv \frac{p-1}{2}~(\mathrm{mod}~p-1)$, which also implies that $i_s\bar{o}(s)\equiv 0~(\mathrm{mod}~\frac{p-1}{2})$. By the minimality of $\bar{o}(s)$, we claim that $\bar{o}(s)=\frac{\frac{p-1}{2}}{\mathrm{gcd}(i_s,\frac{p-1}{2})}=\frac{p-1}{\mathrm{gcd}(2i_s,p-1)}$. Therefore, $\bar{\tau}^i\in A_1$ is in the cycle $(\bar{\tau}^i,\alpha_{s,0}(\bar{\tau}^i),\alpha_{s,0}^2(\bar{\tau}^i),\ldots,\alpha_{s,0}^{\bar{o}(s)-1}(\bar{\tau}^i))=(\bar{\tau}^i,\bar{\tau}^{si},\bar{\tau}^{s^2i},\ldots,$ $\bar{\tau}^{s^{\bar{o}(s)-1}i})$. Thus the permutation $\alpha_{s,0}$ splits $A_1$ into $\frac{\frac{p-1}{2}}{\bar{o}(s)}=\frac{\mathrm{gcd}(2i_s,p-1)}{2}$ cycles each of length $\bar{o}(s)=\frac{p-1}{\mathrm{gcd}(2i_s,p-1)}$. Also note that $\bar{o}(s)=1$ if and only if $s=-1$ due to $s\neq 1$. 

 Since the order of $s$ in $\mathbb{Z}_p^\times$ is equal to $o(s)=o(z^{i_s})=\frac{p-1}{\mathrm{gcd}(i_s,p-1)}$,  for any $j\in \mathbb{Z}_p\setminus\{0\}$, 
 $\tau^j\sigma\in A_2$ is in the cycle $(\tau^j\sigma,\alpha_{s,0}(\tau^j\sigma),\alpha_{s,0}^2(\tau^j\sigma),\ldots,$ $\alpha_{s,0}^{o(s)-1}(\tau^j\sigma))=(\tau^j\sigma,\tau^{sj}\sigma,\tau^{s^2j}\sigma,\ldots,\tau^{s^{o(s)-1}j}\sigma)$. Also note that $\alpha_{s,0}(\tau^0\sigma)=\tau^{0}\sigma$, so $\tau^0\sigma\in A_2$ is in the cycle $(\tau^0\sigma)$. Thus the permutation $\alpha_{s,0}$ splits  $A_2$ into $\frac{p-1}{o(s)}=\mathrm{gcd}(i_s,p-1)$ cycles each of length $o(s)=\frac{p-1}{\mathrm{gcd}(i_s,p-1)}$ and one cycle of length $1$. Then we have obtained the cycle type of $\alpha_{s,t}$, as shown in (\ref{eq-15}).
 
 We complete the proof.
\end{proof}

\begin{lem}\label{cycle-index-4}
Let $A=A_1\cup A_2$ be defined as in (\ref{eq-13}). The cycle index of $\mathrm{Aut}(D_{2p})$ acting on $A$ is given by
\begin{equation}\label{eq-16}
\mathcal{I}(\mathrm{Aut}(D_{2p}),A)=\frac{1}{p}x_1^{\frac{p-1}{2}}(x_p-x_1^p)+\frac{1}{p-1}x_1\cdot\sum_{i=0}^{p-2}x_{\frac{p-1}{\mathrm{gcd}(2i,p-1)}}^{\frac{\mathrm{gcd}(2i,p-1)}{2}}x_{\frac{p-1}{\mathrm{gcd}(i,p-1)}}^{\mathrm{gcd}(i,p-1)}.
\end{equation}
\end{lem}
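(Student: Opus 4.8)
The plan is to follow exactly the route used to pass from Lemma~\ref{cycle-index-1} to Lemma~\ref{cycle-index-2}: substitute the cycle types computed in Lemma~\ref{cycle-index-3} into the definition~(\ref{eq-1}) of the cycle index and simplify. Since $|\mathrm{Aut}(D_{2p})|=p(p-1)$, the starting point is
\begin{equation*}
\mathcal{I}(\mathrm{Aut}(D_{2p}),A)=\frac{1}{p(p-1)}\sum_{s\in\mathbb{Z}_p^\times}\sum_{t\in\mathbb{Z}_p}x_1^{b_1(\alpha_{s,t})}x_2^{b_2(\alpha_{s,t})}\cdots x_{\frac{3p-1}{2}}^{b_{\frac{3p-1}{2}}(\alpha_{s,t})},
\end{equation*}
and I would split the outer sum according to whether $s=1$ or $s\neq1$.

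For $s=1$, formula~(\ref{eq-14}) shows that $\alpha_{1,0}$ contributes the monomial $x_1^{(3p-1)/2}$ while each of the $p-1$ automorphisms $\alpha_{1,t}$ with $t\neq0$ contributes $x_1^{(p-1)/2}x_p$, so this block sums to $x_1^{(3p-1)/2}+(p-1)x_1^{(p-1)/2}x_p$. For $s\neq1$, Lemma~\ref{cycle-index-3} gives $b_k(\alpha_{s,t})=b_k(\alpha_{s,0})$ for every $t$, so the inner sum over $t$ merely multiplies the monomial attached to $\alpha_{s,0}$ by $p$; writing $s=z^{i_s}$ and using that $s$ runs over $\mathbb{Z}_p^\times\setminus\{1\}$ precisely when $i_s$ runs over $\{1,\dots,p-2\}$, formula~(\ref{eq-15}) turns this block into $p\,x_1\sum_{i=1}^{p-2}x_{\frac{p-1}{\mathrm{gcd}(2i,p-1)}}^{\frac{\mathrm{gcd}(2i,p-1)}{2}}x_{\frac{p-1}{\mathrm{gcd}(i,p-1)}}^{\mathrm{gcd}(i,p-1)}$.

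The only step beyond routine bookkeeping is to notice that extending this last sum to include the index $i=0$ costs nothing: since $\mathrm{gcd}(0,p-1)=p-1$, the $i=0$ summand multiplied by $x_1$ equals $x_1\cdot x_1^{(p-1)/2}\cdot x_1^{p-1}=x_1^{(3p-1)/2}$, which is exactly the $s=1,\,t=0$ contribution found above. Adding and subtracting $p\,x_1^{(3p-1)/2}$ then collects everything into
\begin{equation*}
\frac{1}{p(p-1)}\Big[(p-1)x_1^{(p-1)/2}(x_p-x_1^p)+p\,x_1\sum_{i=0}^{p-2}x_{\frac{p-1}{\mathrm{gcd}(2i,p-1)}}^{\frac{\mathrm{gcd}(2i,p-1)}{2}}x_{\frac{p-1}{\mathrm{gcd}(i,p-1)}}^{\mathrm{gcd}(i,p-1)}\Big],
\end{equation*}
and dividing through yields~(\ref{eq-16}). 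I do not anticipate any genuine obstacle: once Lemma~\ref{cycle-index-3} is in hand the computation is mechanical, and the single point demanding care is the consistency of the $\mathrm{gcd}$ conventions at $i=0$ (the partial merging of the two displayed $x$-factors with the leading $x_1$ when $s=-1$, flagged in the parenthetical remark of Lemma~\ref{cycle-index-3}, is harmless here since we only substitute exponents into the monomial).
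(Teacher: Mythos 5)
Your proposal is correct and matches the paper's own proof essentially step for step: the same split into $s=1$ and $s\neq1$, the same use of $b_k(\alpha_{s,t})=b_k(\alpha_{s,0})$ to collapse the sum over $t$, and the same trick of extending the sum to $i=0$ (using $\mathrm{gcd}(0,p-1)=p-1$) while adding and subtracting $p\,x_1^{\frac{3p-1}{2}}$ to produce the factor $x_1^{\frac{p-1}{2}}(x_p-x_1^p)$. Your remark that the $s=-1$ case merely merges a factor $x_1^{\frac{p-1}{2}}$ into the leading $x_1$ and causes no harm is also consistent with the parenthetical note in Lemma \ref{cycle-index-3}.
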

\begin{proof}
By Lemma \ref{cycle-index-3}, the cycle index of $\mathrm{Aut}(D_{2p})$ acting on $A$ is 
\begin{eqnarray*}
&&\mathcal{I}(\mathrm{Aut}(D_{2p}),A)\\
&=&\frac{1}{|\mathrm{Aut}(D_{2p})|}\sum_{\alpha_{s,t}\in \mathrm{Aut}(D_{2p})}{x_1^{b_1(\alpha_{s,t})}x_2^{b_2(\alpha_{s,t})}\cdots x_{\frac{3p-1}{2}}^{b_{\frac{3p-1}{2}}(\alpha_{s,t})}}\\
&=&\frac{1}{p(p-1)}\sum_{s\in\mathbb{Z}_p^\times}\sum_{t\in\mathbb{Z}_p}{x_1^{b_1(\alpha_{s,t})}x_2^{b_2(\alpha_{s,t})}\cdots x_{\frac{3p-1}{2}}^{b_{\frac{3p-1}{2}}(\alpha_{s,t})}}\\
&=&\frac{1}{p(p-1)}\bigg[\sum_{t\in\mathbb{Z}_p}x_1^{b_1(\alpha_{1,t})}x_2^{b_2(\alpha_{1,t})}\cdots x_{\frac{3p-1}{2}}^{b_{\frac{3p-1}{2}}(\alpha_{1,t})}+\sum_{s\in\mathbb{Z}_p^\times\setminus\{1\}}\sum_{t\in\mathbb{Z}_p}x_1^{b_1(\alpha_{s,t})}x_2^{b_2(\alpha_{s,t})}\cdots x_{\frac{3p-1}{2}}^{b_{\frac{3p-1}{2}}(\alpha_{s,t})}\bigg]\\
&=&\frac{1}{p(p-1)}\bigg[\sum_{t\in\mathbb{Z}_p}x_1^{b_1(\alpha_{1,t})}x_2^{b_2(\alpha_{1,t})}\cdots x_{\frac{3p-1}{2}}^{b_{\frac{3p-1}{2}}(\alpha_{1,t})}+p\cdot\sum_{s\in\mathbb{Z}_p^\times\setminus\{1\}}x_1^{b_1(\alpha_{s,0})}x_2^{b_2(\alpha_{s,0})}\cdots x_{\frac{3p-1}{2}}^{b_{\frac{3p-1}{2}}(\alpha_{s,0})}\bigg]\\
&=&\frac{1}{p(p-1)}\bigg[x_1^{\frac{3p-1}{2}}+(p-1)x_1^{\frac{p-1}{2}}x_p+px_1\cdot\sum_{s=z^{i_s}\in\mathbb{Z}_p^\times\setminus\{1\}}x_{\frac{p-1}{\mathrm{gcd}(2i_s,p-1)}}^{\frac{\mathrm{gcd}(2i_s,p-1)}{2}}x_{\frac{p-1}{\mathrm{gcd}(i_s,p-1)}}^{\mathrm{gcd}(i_s,p-1)}\bigg]\\
&=&\frac{1}{p(p-1)}\bigg[x_1^{\frac{3p-1}{2}}+(p-1)x_1^{\frac{p-1}{2}}x_p+px_1\cdot\sum_{i_s=1}^{p-2}x_{\frac{p-1}{\mathrm{gcd}(2i_s,p-1)}}^{\frac{\mathrm{gcd}(2i_s,p-1)}{2}}x_{\frac{p-1}{\mathrm{gcd}(i_s,p-1)}}^{\mathrm{gcd}(i_s,p-1)}\bigg]\\
&=&\frac{1}{p(p-1)}\bigg[x_1^{\frac{3p-1}{2}}+(p-1)x_1^{\frac{p-1}{2}}x_p+px_1\cdot\sum_{i_s=0}^{p-2}x_{\frac{p-1}{\mathrm{gcd}(2i_s,p-1)}}^{\frac{\mathrm{gcd}(2i_s,p-1)}{2}}x_{\frac{p-1}{\mathrm{gcd}(i_s,p-1)}}^{\mathrm{gcd}(i_s,p-1)}-px_1^{\frac{3p-1}{2}}\bigg]\\
&=&\frac{1}{p}x_1^{\frac{p-1}{2}}(x_p-x_1^p)+\frac{1}{p-1}x_1\cdot\sum_{i=0}^{p-2}x_{\frac{p-1}{\mathrm{gcd}(2i,p-1)}}^{\frac{\mathrm{gcd}(2i,p-1)}{2}}x_{\frac{p-1}{\mathrm{gcd}(i,p-1)}}^{\mathrm{gcd}(i,p-1)}.
\end{eqnarray*}
\end{proof}
According to Lemmas \ref{cycle-index-4} and \ref{polya-1}, we obtain the number of Cayley graphs on $D_{2p}$ up to isomorphism immediately.
\begin{thm}\label{thm-enu-4}
Let $p$ be an odd prime. The number of Cayley graphs on $D_{2p}$ up to isomorphism is equal to
\begin{equation}\label{eq-17}
\mathcal{N}'=\frac{1}{p}(2^{\frac{p+1}{2}}-2^{\frac{3p-1}{2}})+\frac{2}{p-1}\sum_{i=0}^{p-2}2^{\frac{\mathrm{gcd}(2i,p-1)}{2}+\mathrm{gcd}(i,p-1)}.
\end{equation}
\end{thm}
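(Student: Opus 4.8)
The plan is to apply the P\'{o}lya Enumeration Theorem (Lemma~\ref{polya-1}) to the group action $(\mathrm{Aut}(D_{2p}),C^A)$, exactly as was done for digraphs in Theorem~\ref{thm-enu-1}. Recall from the discussion at the beginning of Section~\ref{s-4} that $\mathrm{Aut}(D_{2p})$ acts faithfully on $A=A_1\cup A_2$, and that two Cayley graphs $\mathrm{Cay}(D_{2p},S)$ and $\mathrm{Cay}(D_{2p},T)$ are isomorphic if and only if $f_S,f_T\in C^A$ lie in the same orbit of this action (this uses Lemma~\ref{DCI}, noting that a symmetric connection set $S\subseteq D_{2p}\setminus\{e\}$ corresponds precisely to a subset of $A$ in the reduced coordinates of~(\ref{eq-13})). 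Hence the number $\mathcal{N}'$ of Cayley graphs on $D_{2p}$ up to isomorphism equals the number of orbits of $(\mathrm{Aut}(D_{2p}),C^A)$, which by Lemma~\ref{polya-1} with $|C|=m=2$ is $P_{\mathrm{Aut}(D_{2p})}(2,2,\ldots,2)=\mathcal{I}(\mathrm{Aut}(D_{2p}),A)|_{x_i=2}$.

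First I would write down the cycle index from Lemma~\ref{cycle-index-4}:
\begin{equation*}
\mathcal{I}(\mathrm{Aut}(D_{2p}),A)=\frac{1}{p}x_1^{\frac{p-1}{2}}(x_p-x_1^p)+\frac{1}{p-1}x_1\cdot\sum_{i=0}^{p-2}x_{\frac{p-1}{\mathrm{gcd}(2i,p-1)}}^{\frac{\mathrm{gcd}(2i,p-1)}{2}}x_{\frac{p-1}{\mathrm{gcd}(i,p-1)}}^{\mathrm{gcd}(i,p-1)}.
\end{equation*}
Then I would substitute $x_i=2$ for every $i$ uniformly. In the first summand, $x_1^{\frac{p-1}{2}}(x_p-x_1^p)$ becomes $2^{\frac{p-1}{2}}(2-2^p)=2^{\frac{p+1}{2}}-2^{\frac{3p-1}{2}}$, giving the first term $\frac{1}{p}(2^{\frac{p+1}{2}}-2^{\frac{3p-1}{2}})$. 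In the second summand, the factor $x_1$ contributes $2$, and each term $x_{\ast}^{\frac{\mathrm{gcd}(2i,p-1)}{2}}x_{\ast}^{\mathrm{gcd}(i,p-1)}$ becomes $2^{\frac{\mathrm{gcd}(2i,p-1)}{2}+\mathrm{gcd}(i,p-1)}$ (the subscripts are irrelevant once every indeterminate is set to $2$). Pulling the factor $2$ out of the sum yields $\frac{2}{p-1}\sum_{i=0}^{p-2}2^{\frac{\mathrm{gcd}(2i,p-1)}{2}+\mathrm{gcd}(i,p-1)}$, which is precisely the second term of~(\ref{eq-17}). Adding the two terms gives $\mathcal{N}'$, completing the proof.

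There is essentially no hard step here — the argument is a direct evaluation of the cycle index, with all the real work already carried out in establishing Lemmas~\ref{cycle-index-3} and~\ref{cycle-index-4} and in verifying faithfulness of the action. The one point that deserves a sentence of care is the translation between symmetric subsets $S$ of $D_{2p}\setminus\{e\}$ and subsets of the reduced set $A$ in~(\ref{eq-13}): a symmetric $S$ is determined by which pairs $\bar{\tau}^i=\{\tau^i,\tau^{-i}\}$ it contains (since $\tau^i\in S\iff\tau^{-i}\in S$, and the reflections $\tau^j\sigma$ are their own inverses), so symmetric connection sets correspond bijectively and $\mathrm{Aut}(D_{2p})$-equivariantly to subsets of $A$; this is what legitimizes applying Lemma~\ref{polya-1} in these coordinates. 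If one wants to be fully explicit, one can also note $2^{\frac{p+1}{2}}-2^{\frac{3p-1}{2}}<0$, so the first term is negative, but this causes no difficulty as $\mathcal{N}'$ is a sum of rationals that happens to be a positive integer, just as with $\mathcal{N}$ in Theorem~\ref{thm-enu-1}.
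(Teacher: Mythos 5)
Your proposal is correct and follows exactly the paper's route: the number of isomorphism classes is the number of orbits of $(\mathrm{Aut}(D_{2p}),C^A)$, and one obtains (\ref{eq-17}) by substituting $x_i=2$ for all $i$ in the cycle index of Lemma \ref{cycle-index-4} via Lemma \ref{polya-1}, which is precisely how the paper deduces Theorem \ref{thm-enu-4}. Your added remark on the equivariant bijection between symmetric connection sets and subsets of the reduced set $A$ in (\ref{eq-13}) is a fair elaboration of the paper's own setup at the start of Section \ref{s-4}, not a departure from it.
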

In \cite{Mishna}, Mishna also enumerated the circulant digraphs of order $p$ up to isomorphism.
\begin{lem}(See \cite{Mishna}.)\label{circ-graph}
Let $p$ be an odd prime. The number of circulant  graphs of order $p$  up to isomorphism is equal to
\begin{equation}\label{eq-18}
\mathcal{N}_{c}'=\frac{2}{p-1}\sum_{d\mid\frac{p-1}{2}}\Phi(d)2^{\frac{p-1}{2d}},
\end{equation}
where $\Phi(\cdot)$ is the Euler's totient function.
\end{lem}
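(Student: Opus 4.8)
The plan is to repeat the argument of Section~\ref{s-4} with the cyclic group $\mathbb{Z}_p$ in place of the dihedral group $D_{2p}$. A circulant graph of order $p$ is $\mathrm{Cay}(\mathbb{Z}_p,S)$ for a symmetric $S\subseteq\mathbb{Z}_p\setminus\{0\}$; since $p$ is odd, $i\neq -i$ whenever $i\neq 0$, so such $S$ are in bijection with the subsets of
\[
A=\Big\{\bar{i}=\{i,-i\}\mid 1\leq i\leq \frac{p-1}{2}\Big\},
\]
hence with the characteristic functions in $C^A$, where $C=\{0,1\}$. Since $\mathbb{Z}_p$ ($p$ prime) is a CI-group, two circulant graphs $\mathrm{Cay}(\mathbb{Z}_p,S)$ and $\mathrm{Cay}(\mathbb{Z}_p,T)$ are isomorphic if and only if $\alpha(S)=T$ for some $\alpha\in\mathrm{Aut}(\mathbb{Z}_p)=\mathbb{Z}_p^\times$, i.e. if and only if $f_S$ and $f_T$ lie in the same orbit of $(\mathbb{Z}_p^\times,C^A)$. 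So, by Lemma~\ref{polya-1}, it suffices to compute the cycle index $\mathcal{I}(\mathbb{Z}_p^\times,A)$ and then set $x_1=x_2=\cdots=2$.

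To compute this cycle index, first note that $\mathbb{Z}_p^\times$ acts on $A$ by $s\cdot\bar{i}=\overline{si}$ and that the kernel of this action is exactly $\{1,-1\}$: if $sj\equiv\pm j$ for every $j\in\mathbb{Z}_p\setminus\{0\}$, then taking $j=1$ forces $s=\pm1$. Since the cycle index depends only on the image of the action, $\mathcal{I}(\mathbb{Z}_p^\times,A)=\mathcal{I}(\mathbb{Z}_p^\times/\{\pm1\},A)$, and $\mathbb{Z}_p^\times/\{\pm1\}$ is cyclic of order $\frac{p-1}{2}$. Moreover the stabilizer of $\bar{1}$ in $\mathbb{Z}_p^\times$ is $\{1,-1\}$, so $\mathbb{Z}_p^\times/\{\pm1\}$ acts \emph{regularly} on $A$; that is, $(\mathbb{Z}_p^\times/\{\pm1\},A)$ is the regular representation of a cyclic group of order $\frac{p-1}{2}$. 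Its cycle index is the classical
\[
\mathcal{I}(\mathbb{Z}_n,\mathbb{Z}_n)=\frac{1}{n}\sum_{d\mid n}\Phi(d)\,x_d^{\,n/d},
\]
because $g^{k}$ has order $n/\mathrm{gcd}(k,n)$ and therefore splits into $\mathrm{gcd}(k,n)$ cycles of that length, while exactly $\Phi(d)$ values of $k$ satisfy $\mathrm{gcd}(k,n)=n/d$. Taking $n=\frac{p-1}{2}$ gives $\mathcal{I}(\mathbb{Z}_p^\times,A)=\frac{2}{p-1}\sum_{d\mid\frac{p-1}{2}}\Phi(d)\,x_d^{\frac{p-1}{2d}}$, and Lemma~\ref{polya-1} with $|C|=2$ then yields $\mathcal{N}_c'$.

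Alternatively, one can avoid the quotient and argue directly as in Case~2 of the proof of Lemma~\ref{cycle-index-3} (indeed $A$ here is exactly the set $A_1$ there): writing $s=z^{i}$, the $\langle s\rangle$-orbit of $\bar{j}$ has length $\frac{p-1}{\mathrm{gcd}(2i,p-1)}$, so $s$ contributes the monomial $x_{\frac{p-1}{\mathrm{gcd}(2i,p-1)}}^{\frac{\mathrm{gcd}(2i,p-1)}{2}}$, and one reindexes $\frac{1}{p-1}\sum_{i=0}^{p-2}(\cdots)$ by $d=\mathrm{gcd}(i,\frac{p-1}{2})$. The step needing the most care — and where an off-by-a-factor-of-two error is easiest to make — is the count in this reindexing: each $d\mid\frac{p-1}{2}$ is attained by exactly $2\Phi\big(\frac{p-1}{2d}\big)$ of the $p-1$ residues $i\in\mathbb{Z}_{p-1}$, the extra factor $2$ coming from the sign $\pm1$, which recombines with the prefactor $\frac{1}{p-1}$ to give $\frac{2}{p-1}\Phi(\cdot)$. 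The quotient argument above is precisely a way of making this bookkeeping transparent.
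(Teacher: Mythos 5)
Your proof is correct, and it is essentially the same approach as the source: the paper only cites Mishna for this count, and your argument is exactly the Pólya computation the paper performs for $D_{2p}$ in Section~\ref{s-4}, specialized to $\mathbb{Z}_p$ — use the CI-property of $\mathbb{Z}_p$, then compute the cycle index of $\mathbb{Z}_p^\times$ acting (through its quotient by $\{\pm 1\}$, which acts regularly) on the $\frac{p-1}{2}$ inverse-pairs, and substitute $x_i=2$. Your bookkeeping in the alternative reindexing is also right, since $\gcd(2i,p-1)=2\gcd\bigl(i,\tfrac{p-1}{2}\bigr)$ and each value of $\gcd\bigl(i,\tfrac{p-1}{2}\bigr)$ is attained twice as $i$ runs over $\mathbb{Z}_{p-1}$, which produces the factor $\frac{2}{p-1}$ in (\ref{eq-18}).
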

As in Section \ref{s-3}, from Theorem \ref{thm-enu-4} and Lemma \ref{circ-graph} we also give the number of  connected Cayley graphs on $D_{2p}$ up to isomorphism.
\begin{thm}\label{thm-enu-5}
Let $p$ be an odd prime. The number of connected Cayley graphs on $D_{2p}$ up to isomorphism is equal to
\begin{equation}\label{eq-19}
\mathcal{N}_1'=\mathcal{N}'-\mathcal{N}_{c}'-1,
\end{equation}
where $\mathcal{N}'$ and $\mathcal{N}_{c}'$ are presented in (\ref{eq-17}) and (\ref{eq-18}), respictively.
\end{thm}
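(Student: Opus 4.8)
The plan is to follow the pattern of the proof of Theorem~\ref{thm-enu-2}, now using the set $A=A_1\cup A_2$ introduced in Section~\ref{s-4}. Since $\mathrm{Cay}(D_{2p},S)$ is connected if and only if $\langle S\rangle=D_{2p}$, the first step is to pin down exactly which subsets $S\subseteq A$ (equivalently, which symmetric subsets of $D_{2p}\setminus\{e\}$) fail to generate $D_{2p}$. As $p$ is prime, the proper subgroups of $D_{2p}$ are the trivial subgroup, $\langle\tau\rangle$, and the $p$ order-$2$ subgroups $\langle\tau^j\sigma\rangle$; using this I would check that whenever $S$ contains a reflection $\tau^j\sigma$ together with any further element of $A$ --- either a second reflection $\tau^{j'}\sigma$, giving the nontrivial rotation $\tau^{\,j-j'}\in\langle S\rangle$, or a pair $\bar\tau^i$ --- the subgroup $\langle S\rangle$ contains a nontrivial power of $\tau$ and hence, $p$ being prime, equals $D_{2p}$. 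It follows that $\mathrm{Cay}(D_{2p},S)$ is disconnected precisely when $S\subseteq A_1$ or $S=\{\tau^j\sigma\}$ for a single $j\in\mathbb{Z}_p$.

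The second step counts the isomorphism classes of these disconnected graphs. By Lemma~\ref{DCI} two Cayley graphs are isomorphic exactly when their connection sets lie in one $\mathrm{Aut}(D_{2p})$-orbit, and since each $\alpha_{s,t}$ preserves both $A_1$ and $A_2$, the disconnected classes split into two families. The subsets of $A_1$ form orbits under the restricted action $\bar\tau^i\mapsto\bar\tau^{si}$, which depends only on $s\in\mathbb{Z}_p^\times$ and coincides with the action governing isomorphism of circulant graphs of order $p$ (here one invokes that the cyclic group $\mathbb{Z}_p$ is a CI-group); hence this family contributes exactly $\mathcal{N}_c'$ classes, the empty set included. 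The singletons $\{\tau^j\sigma\}$ are mutually $\mathrm{Aut}(D_{2p})$-equivalent via a suitable $\alpha_{1,t}$, and since such a set meets $A_2$ it is equivalent to no subset of $A_1$, so it contributes one further class. Hence there are $\mathcal{N}_c'+1$ isomorphism classes of disconnected Cayley graphs on $D_{2p}$, and subtracting this count from Theorem~\ref{thm-enu-4} gives $\mathcal{N}_1'=\mathcal{N}'-\mathcal{N}_c'-1$.

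I expect the one point needing care to be the identification of $\mathrm{Aut}(D_{2p})$-orbits of subsets of $A_1$ with isomorphism classes of circulant graphs of order $p$: one must verify that $\tau^i\leftrightarrow i$ carries symmetric connection sets in $\mathbb{Z}_p\setminus\{0\}$ bijectively onto subsets of $A_1$ and intertwines the two actions, so that no orbit is merged or split, and one must make sure the empty connection set is counted only once --- it already sits inside $\mathcal{N}_c'$, which is precisely why the ``extra'' disconnected class is the single-reflection graph and the final correction is $-1$ rather than something larger. The remaining verifications are the same routine bookkeeping as in the proof of Theorem~\ref{thm-enu-2}.
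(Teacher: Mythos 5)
Your proposal is correct and follows essentially the same route as the paper: the paper likewise observes (as in Section~\ref{s-3}) that $\mathrm{Cay}(D_{2p},S)$ is disconnected exactly when $S\subseteq A_1$ or $S=\{\tau^j\sigma\}$, identifies the first family with the $\mathcal{N}_c'$ circulant classes of Lemma~\ref{circ-graph} and the second with a single class, and subtracts from Theorem~\ref{thm-enu-4}. Your extra verifications (the subgroup lattice of $D_{2p}$ and the matching of the $\mathrm{Aut}(D_{2p})$-action on $A_1$ with the circulant isomorphism classes) are details the paper treats as immediate, so no substantive difference.
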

\begin{exam}\label{exam-3}
\emph{As in Example \ref{exam-1}, we take $p=3$ and consider the dihedral group $D_6$.
Let $A=D_6\setminus\{e\}$. Then it is easy to see that all the representative elements of $\mathrm{Aut}(D_6)$-equivalent classes of inverse-closed subsets of $A$ are as follows: $\emptyset$, $\{\sigma\}$, $\{\tau,\tau^2\}$,  $\{\sigma,\tau\sigma\}$,  $\{\tau,\tau^2,\sigma\}$, $\{\sigma,\tau\sigma,\tau^2\sigma\}$,  $\{\tau,\tau^2,\sigma,\tau\sigma\}$, $\{\tau,\tau^2,\sigma,\tau\sigma,\tau^2\sigma\}$. Thus there are exactly  eight Cayley graphs on $D_6$ up to isomorphism in which five are connected. By Theorems \ref{thm-enu-4} and \ref{thm-enu-5}, we have
\begin{equation*}
\begin{aligned}
\mathcal{N}'&=\frac{1}{3}(2^{\frac{4}{2}}-2^{\frac{8}{2}})+\sum_{i=0}^{1}2^{\frac{\mathrm{gcd}(2i,2)}{2}+\mathrm{gcd}(i,2)}=8,\\
\mathcal{N}_1'&=\mathcal{N}'-\mathcal{N}_c'-1=8-\sum_{d\mid 1}\Phi(d)2^{\frac{2}{2d}}-1=5.
\end{aligned}
\end{equation*}
}
\end{exam}

At the end of this paper, we list the number of connected Cayley graphs on $D_{2p}$   ($p$ prime) up to isomorphism for $3\leq p\leq 41$ by applying Theorem \ref{thm-enu-5} (see Tab. \ref{tab-2}).

\begin{table}[h]\footnotesize{
\caption{\label{tab-2}
\small{The number of connected Cayley graphs on $D_{2p}$ ($3\leq p\leq 41$).}}
\begin{tabular*}{15cm}{@{\extracolsep{\fill}}cccccc}
\toprule
$p$&$\mathcal{N}_1'$&$p$& $\mathcal{N}_1'$&$p$& $\mathcal{N}_1'$\\
  \midrule
  3&5&13&4089&29&10851803161\\
  5&16&17&131623&31&75736903695\\
  7&51&19&814155&37&27052464217661\\
  11&807&23&34333611&41&1406056810572383\\
\bottomrule
\end{tabular*}}
\end{table}

\end{document}